\newtheorem{thm}{Theorem}[section]
\theoremstyle{plain}
\newtheorem{lem}[thm]{Lemma}
\newtheorem{prop}[thm]{Proposition}
\newtheorem{cor}[thm]{Corollary}
\theoremstyle{definition}
\theoremstyle{remark}
\newtheorem{rem}[thm]{Remark}
\newtheorem{question}[thm]{Question}
\definecolor{A}{rgb}{.75,1,.75}
\numberwithin{equation}{section}
\newcommand{\ds}{\displaystyle}
\newcommand{\C}{\mathbb C}
\newcommand{\Z}{\mathbb Z}
\newcommand{\var}{\varepsilon}
\newcommand{\F}{\mathbb F}
\newcommand{\la}{\lambda}
\newcommand{\ga}{\gamma}
\newcommand{\Ga}{\Gamma}
\newcommand{\De}{\Delta}
\newcommand{\na}{\nabla}
\newcommand{\om}{\omega}
\newcommand{\St}{{\rm St}}
\newcommand{\sym}{S^{\bullet}(V)}
\newcommand{\wedg}{\wedge^{\bullet}(V)}
\newcommand{\Det}{{\rm Det}}
\newcommand{\Gq}{GL_n(q)}
\newcommand{\GF}{GL_n(\F)}
\newcommand{\Grk}{\mathcal{G}_r(\F)}
\begin{document}

\title[The $GL_n(q)$-module structure
of the symmetric algebra]{The $GL_n(q)$-module structure
of the symmetric algebra around the Steinberg module}
\author[Wan and Wang]{Jinkui Wan and Weiqiang Wang}

\address{
Department of Mathematics,
Beijing Institute of Technology,
Beijing, 100081, P.R. China. }
\email{wjk302@gmail.com}

\address{Department of Mathematics, University of Virginia,
Charlottesville,VA 22904, USA.}
\email{ww9c@virginia.edu}

\begin{abstract}
We determine the graded composition multiplicity in the symmetric
algebra $\sym$ of the natural $\Gq$-module $V$, or equivalently in
the coinvariant algebra of $V$, for a large class of irreducible
modules around the Steinberg module. This was built on a
computation, via connections to algebraic groups, of the Steinberg
module multiplicity in a tensor product of $\sym$ with other tensor
spaces of fundamental weight modules.
\end{abstract}

\maketitle

\section{Introduction}

The symmetric algebra $\sym$ is naturally a graded module over the
finite general linear group $GL_n(q)$, where $V =\F^n$ is the
standard $GL_n(q)$-module over an algebraically closed field $\F$ of
characteristic~$p$ and $q=p^r$ for $r\geq 1$. Dickson's classical
theorem \cite{Di} states that the algebra of $GL_n(q)$-invariants in
$\sym$ is a polynomial algebra in $n$ generators, and this work
served as the starting point of all the subsequent works on the
$\Gq$-module structure of $\sym$ and closely related modules.

The composition multiplicity of the Steinberg module $\St$ in $\sym
\otimes \wedg \otimes \Det^k$, where $\wedg$ denotes the exterior
algebra of $V$ and $\Det$ denotes the determinant module, has been
determined in various special cases by Kuhn, Mitchell, and Priddy
\cite{KM, Mi, MP} and in full generality by the authors \cite{WW}.
The topological approach of \cite{Mi, MP} using Steenrod algebra
worked only in a prime field and it is not clear how to develop
further along this line. On the other hand, the approach of
\cite{KM, WW} is based on a modular version of a formula of Curtis
in terms of parabolic subgroup invariants (for closely related work
see \cite{Mui, MT}). These parabolic subgroup invariants were
determined in a constructive manner, and it seems difficult to
extend the approach much further.

By a basic observation of Mitchell \cite{Mi}, finding the graded
multiplicity of a simple module $L$ in the symmetric algebra $\sym$
is equivalent to finding the graded composition multiplicity of $L$
in the coinvariant algebra of $V$ which is a graded regular
representation of $\Gq$. A full answer for every simple $\Gq$-module
is beyond the reach for now as it would imply the degrees of all
principal indecomposable modules (PIMs).

The main goal of the paper is to find an elegant closed formula for
the graded composition multiplicity in $\sym$ for a large class of
simple $\Gq$-modules around the Steinberg module (i.e., simple
modules of highest weights not far from the Steinberg weight
$(q-1)\rho$). The twisting by the determinant module plays an
important role in this paper.

Our new approach is based on the intimate and deep connections
between representations of $\Gq$ and of the algebraic group $\GF$,
and it is a two-step process. First,  via connections to algebraic
groups, we compute the graded composition multiplicity of  $\St$ in
various tensor modules of the form $\sym \otimes N$ for some natural
$\GF$-modules $N$. Secondly, such a composition multiplicity of
$\St$ when combined with classical results on PIMs of $\Gq$ are used
to derive a closed formula for the graded composition multiplicity in
$\sym$ for a large class of simple $\Gq$-modules around the
Steinberg module (up to twists by $\Det$).

Let us explain in some detail. Bendel, Nakano and Pillen \cite{BNP}
has recently developed an amazing link between the
$\text{Ext}$-groups of $\Gq$ and of $\GF$, and used it to find upper
bounds for cohomology of finite groups. As explained to us by Pillen
(see Section~\ref{sec:Preliminary}), the machinery of \cite{BNP} can
be used effectively to transform the problem of computing the
Steinberg module multiplicity in a rational $\GF$-module with a good
filtration (viewed as a $\Gq$-module) into a problem of counting
multiplicities in infinitely many rational $\GF$-modules with good
filtrations. By a classical result of J.-P. Wang \cite{Wa}, the
latter becomes essentially a highly nontrivial combinatorial problem
of counting multiplicities of irreducible characters in
characteristic zero. In our cases of interest, the intricate
combinatorial problem can be eventually solved with a key tool being
the Pieri formula. In this way, we are able to determine the graded
multiplicity of the Steinberg module in $\sym\otimes
\wedge^m(V)\otimes \Det^k$ (see Theorem~\ref{thm:symwedg}) and more
generally in $\sym \otimes \wedge^\nu(V) \otimes \Det^k$ for
suitable partitions $\nu$ and suitable $k$ (see
Theorem~\ref{symwedgnu}). Theorem~\ref{thm:symwedg} recovers in a
different form one of the main results in \cite[Theorem~C]{WW}.

Note that $\text{Hom}_{\Gq} (\St, \sym \otimes N) \cong
\text{Hom}_{\Gq} (\St \otimes N^*, \sym)$ for a finite dimensional
$\Gq$-module $N$, and that $\St \otimes N^*$ is projective. The
results of Ballard on PIMs \cite{Ba} (which was inspired by
Humphreys and Verma \cite{HV} and improved by Chastkofsky \cite{Ch}
and Jantzen \cite{J1}) allow us to find an explicit decomposition of
$\St \otimes N^*$ for suitable $N$ into a direct sum of PIMs. We
derive from this and Theorem~\ref{symwedgnu} a closed formula for the
graded composition multiplicity in $\sym$ for a large class of
simple modules around the Steinberg module; see
Theorem~\ref{thm:grcompmult2}. In light of an observation in
\cite{Mi}, Theorem~\ref{thm:grcompmult2} affords an equivalent
reformulation in terms of the coinvariant algebra of $V$ in place of
$\sym$; see Theorem~\ref{th:coinv}. Also, from
Theorem~\ref{thm:symwedg} and results of Tsushima \cite{Ts}  on PIMs
(a special case of which goes back to Lusztig \cite{Lu}), we recover
the main results of Carlisle and Walker \cite{CW}, who obtained a
multiplicity formula for several simple modules very close to $\St$
in $\sym$ using an ingenious combinatorial and semigroup approach.

Our work opens a new and effective way of studying the $\Gq$-module
structure of $\sym$ via its connection to algebraic groups. At the
end of the paper, we formulate several open problems, and speculate
a formula on the composition multiplicity in the socle of $\sym$ for
a family of simple modules.

The paper is organized as follows. In Section~\ref{sec:Preliminary}
we recall the basics of the algebraic group $\GF$ and of the finite
group $\Gq$ (a basic reference in this direction is the book of
Humphreys \cite{Hu}), and formulate a key formula derived from
\cite{BNP}. We determine the graded multiplicity of $\St$ in the
tensor products of $\sym$ with various natural $\Gq$-modules in
Section~\ref{sec:Stmult}. This is then applied in
Section~\ref{sec:compmult} to determine an explicit formula for the
graded composition multiplicity in $\sym$, or equivalently in the
coinvariant algebra of $V$, for a large class of simple
$\Gq$-modules.

\vspace{.2cm}

{\bf Acknowledgments.} We are indebted to Cornelius Pillen for
explaining to us a  key consequence of results in \cite{BNP} which
has played a fundamental role in our work. In addition, we thank Jim
Humphreys, Nick Kuhn, and Leonard Scott for stimulating discussions
and helpful references. The research of the second author is
partially supported by NSF grant DMS-0800280.


\section{The preliminaries}\label{sec:Preliminary}


\subsection{Finite group $\Gq$ and algebraic group $\GF$}

Let $GL_n(\F)$ be the general linear group over an algebraically
closed field $\F$ of prime characteristic $p>0$. Let $T$ be the
maximal torus consisting of diagonal matrices in $\GF$ and $B$ be
the Borel subgroup consisting of upper triangular matrices. Denote
by $\Phi^+$ (resp. $\Phi^-$) the corresponding positive (resp.
negative) root system. Then we have the Weyl group $W = S_n$, the
set of simple roots $\Pi=\{\alpha_1,\ldots,\alpha_{n-1}\}$ and the
normalized bilinear form satisfying  $(\alpha,\alpha)=2$ for
$\alpha\in\Phi$.
The simple coroot $\alpha_i^{\vee}=\frac{2\alpha_i}{(\alpha_i,\alpha_i)}$ coincides with $\alpha_i$
for $1\leq i\leq n-1$.
Let $X=X(T)$ be the integral weight lattice which
can be identified with $\Z^n$ and denote the set of dominant
integral weights by
$$
X^+ =\{\la~|~\la=(\la_1,\ldots,\la_n)\in\Z^n,
\la_1\geq\la_2\geq\cdots\geq\la_n \}.
$$

For $\la\in X^+$, there exists a simple $\GF$-module $L(\la)$ of
highest weight $\la$. These $\GF$-modules are pairwise
non-isomorphic and exhaust the isomorphism classes of simple
$\GF$-modules. For $\la\in X^+$, let $\na(\la):= {\rm
ind}^{\GF}_B\la$ be the induced module and $\Delta(\la) :=
\na(-w_0\la)^*$ be the Weyl module of highest weight $\la$, where
$w_0$ is the longest element in $W$. It is known that $\na(\la)$ has
a unique simple submodule isomorphic to $L(\la)$ and $\Delta(\la)$
has a unique simple quotient isomorphic to $L(\la)$.

Let $\text{Fr}: \GF\rightarrow \GF$ denote the Frobenius map, and
let $q=p^r$ for $r \ge 1$. The fixed point subgroup of the $r$th
iterate of the Frobenius map can be identified with $GL_n(q)$.
Denote the set of $q$-restricted weights in $X^+$ by
$$
X_r =\{\lambda\in X^+~|~0\leq\la_n<q, (\lambda,\alpha_i^{\vee})<q, 1\leq i\leq n-1\}. 
$$
The restrictions to $\Gq$ of the simple $\GF$-modules $L(\lambda)$
with $\lambda\in X_r$ form a complete set of pairwise non-isomorphic
simple $\Gq$-module (cf. \cite{Hu}, \cite[II.3]{J}). In particular,
the restriction of $L((q-1)\rho)$ to $\Gq$ is called the Steinberg
module and denoted by $\St =\St_r$, where
$
\rho=(n-1,n-2,\ldots,1,0).
$
We shall also write
\begin{equation}  \label{eq:rhoi}
\rho_i =n-i, \qquad  i=1, \ldots, n.
\end{equation}

Recall that a $\GF$-module $N$ has a good filtration (also called a
$\na$-filtration) if it admits a filtration with successive
quotients of the form $\na(\la)$, $\la\in X^+$ \cite[II 4.16]{J}.
Denote by $[N:\na(\la)]$ the multiplicity of $\na(\la)$ appearing in
a good filtration of $N$. We have the following lemma (cf. \cite[II,
Proposition~4.16]{J}).
\begin{lem}\label{lem:Janzten}
Let $N$ be a $\GF$-module admitting a good filtration. Then, for
each $\la\in X^+$,
\begin{align*}
[N:\na(\la)] &={\rm dim~Hom}_{\GF}(\De(\la),N),  \\
{\rm Ext}^i (\Delta(\la), N) & = 0, \quad \forall i\geq 1.
\end{align*}
\end{lem}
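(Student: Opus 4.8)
The plan is to isolate one fundamental input and then run a formal d\'evissage. The input is the orthogonality of Weyl and dual Weyl modules: for all $\la,\mu\in X^+$,
\begin{equation*}
\dim\mathrm{Hom}_{\GF}(\De(\la),\na(\mu))=\delta_{\la\mu},\qquad \mathrm{Ext}^i_{\GF}(\De(\la),\na(\mu))=0\quad(i\geq 1).
\end{equation*}
In the write-up I would simply cite this (it is part of the circle of results in \cite[II.4]{J}); for orientation, the idea is as follows. Since $\na(\mu)=\mathrm{ind}_B^{\GF}\mu$ and Kempf's theorem gives $R^j\mathrm{ind}_B^{\GF}\mu=0$ for $j\geq 1$, the derived form of Frobenius reciprocity (restriction $\GF\to B$ being exact) yields $\mathrm{Ext}^i_{\GF}(\De(\la),\na(\mu))\cong\mathrm{Ext}^i_B(\De(\la),\mu)\cong H^i\bigl(B,\De(\la)^*\otimes\mu\bigr)$. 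Writing $B=T\ltimes U$ with $T$ linearly reductive, this is the $T$-fixed part of $H^i(U,\De(\la)^*\otimes\mu)$; since the weights of $\De(\la)^*=\na(-w_0\la)$ are $\geq-\la$ and those picked up from $U$-cohomology are sums of positive roots, a $T$-invariant class forces $\la-\mu$ to be a nonnegative sum of positive roots. Running the symmetric argument with $\De$ and $\na$ interchanged forces $\la=\mu$, and a refinement of the same weight count shows that then only $i=0$ survives, spanned by the canonical composite $\De(\la)\twoheadrightarrow L(\la)\hookrightarrow\na(\la)$.

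Granting this, I would induct on the length $\ell$ of a good filtration $0=N_0\subset N_1\subset\cdots\subset N_\ell=N$ with $N_j/N_{j-1}\cong\na(\mu_j)$. The case $\ell=0$ is trivial. For $\ell\geq 1$, apply $\mathrm{Hom}_{\GF}(\De(\la),-)$ to $0\to N_{\ell-1}\to N\to\na(\mu_\ell)\to 0$ to get the long exact sequence
\begin{equation*}
\cdots\longrightarrow\mathrm{Ext}^i_{\GF}(\De(\la),N_{\ell-1})\longrightarrow\mathrm{Ext}^i_{\GF}(\De(\la),N)\longrightarrow\mathrm{Ext}^i_{\GF}(\De(\la),\na(\mu_\ell))\longrightarrow\cdots.
\end{equation*}
For $i\geq 1$ both flanking terms vanish, the left by induction and the right by the fundamental input, so $\mathrm{Ext}^i_{\GF}(\De(\la),N)=0$. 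For $i=0$, the vanishing of $\mathrm{Ext}^1_{\GF}(\De(\la),N_{\ell-1})$ makes
\begin{equation*}
0\to\mathrm{Hom}_{\GF}(\De(\la),N_{\ell-1})\to\mathrm{Hom}_{\GF}(\De(\la),N)\to\mathrm{Hom}_{\GF}(\De(\la),\na(\mu_\ell))\to 0
\end{equation*}
exact, whence $\dim\mathrm{Hom}_{\GF}(\De(\la),N)=\dim\mathrm{Hom}_{\GF}(\De(\la),N_{\ell-1})+\delta_{\la,\mu_\ell}$. By induction this equals $[N_{\ell-1}:\na(\la)]+\delta_{\la,\mu_\ell}=[N:\na(\la)]$; along the way this also shows the multiplicity is independent of the chosen good filtration.

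The only step with genuine content is the orthogonality statement — Kempf vanishing plus the weight bookkeeping in $B$-cohomology; everything afterwards is a routine long-exact-sequence induction. In the paper I would therefore cite \cite[II.4]{J} for the orthogonality (indeed the lemma itself is quoted from \cite[II.4.16]{J}) and include only the d\'evissage above.
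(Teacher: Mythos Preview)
Your argument is correct and is exactly the standard proof; the paper, however, does not prove this lemma at all but simply quotes it from Jantzen \cite[II, Proposition~4.16]{J}. So there is nothing to compare beyond noting that your d\'evissage is precisely the argument underlying that reference.
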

The following fundamental result is due to J.-P. Wang~\cite{Wa} (cf.
\cite[II, Proposition~4.19]{J}).
\begin{lem}  \cite{Wa}  \label{lem:J.Wang}
If $N$ and $N'$ are $\GF$-modules admitting good filtrations, then
the tensor product $N\otimes N'$ also has a good filtration.
\end{lem}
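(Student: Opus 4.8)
The plan is to combine the cohomological characterization of good filtrations with a reduction to the case in which both tensor factors are themselves induced modules, and then to settle that case for $\GF$ by an inductive argument using the Pieri rule.

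First I would record the standard converse to Lemma~\ref{lem:Janzten} (see \cite[II.4.16]{J}): a finite-dimensional $\GF$-module $M$ admits a good filtration if and only if $\text{Ext}^1_{\GF}(\De(\la),M)=0$ for every $\la\in X^+$, in which case $\text{Ext}^i_{\GF}(\De(\la),M)=0$ for all $i\geq 1$ as well. One implication is exactly Lemma~\ref{lem:Janzten}; for the other one argues by induction on $\sum_{\mu}\dim M_\mu$, splitting off a $\na(\mu)$-quotient attached to a maximal weight $\mu$ of $M$.

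Next, given good filtrations of $N$ and of $N'$, the tensor product $N\otimes N'$ inherits a filtration whose successive quotients are the modules $\na(\mu)\otimes\na(\nu)$, ranging over the layers $\na(\mu)$ of $N$ and $\na(\nu)$ of $N'$. Since $\text{Ext}^1_{\GF}(\De(\la),-)$ fits into long exact sequences, an induction on the length of this filtration reduces the claim to the assertion that $\na(\mu)\otimes\na(\nu)$ has a good filtration for all $\mu,\nu\in X^+$; equivalently, via the tensor identity $\na(\mu)\otimes\na(\nu)\cong{\rm ind}_B^{\GF}\big(\mu\otimes(\na(\nu)|_B)\big)$ and the criterion just recorded, that the higher derived functors $R^i{\rm ind}_B^{\GF}$ of $\mu\otimes(\na(\nu)|_B)$ vanish for $i>0$ while the $i=0$ term is good-filtered.

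To handle $\na(\mu)\otimes\na(\nu)$, one cannot simply filter $\na(\nu)|_B$ by one-dimensional $B$-modules and apply Kempf vanishing term by term, because the highest weights $\mu+\xi$ so produced need not be dominant. The remedy is to refine this to a filtration of $\na(\nu)|_B$ by the larger ``relative Schubert'' building blocks of an excellent filtration, for which the desired higher-cohomology vanishing and good-filtration statements are available (Polo, van der Kallen, Mathieu); alternatively, for $\GF$ one can proceed more concretely, inducting on the dominance order: realize $\na(\nu)$ as the top layer of an iterated extension of the minuscule modules $\wedge^{j}(V)$, use that $\na(\mu)\otimes\wedge^{j}(V)$ is good-filtered (a form of the Pieri rule), and propagate the property down the extension by means of the Ext-criterion. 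The main obstacle is precisely this last point: the non-dominance of the weights appearing in a $B$-filtration of $\na(\nu)$ makes the vanishing of the higher derived induction genuinely nontrivial, and controlling it --- whether through excellent filtrations or through the inductive Pieri-rule argument for $\GF$ --- is the substantive content of the proof.
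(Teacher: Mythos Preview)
The paper does not prove this lemma at all: it is simply quoted from \cite{Wa} (with a pointer to \cite[II, Proposition~4.19]{J}), so there is no ``paper's proof'' to compare with. Your sketch therefore goes well beyond what the paper does.

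As a sketch your outline is sound in its overall architecture: the cohomological criterion for good filtrations and the reduction to the building blocks $\na(\mu)\otimes\na(\nu)$ are standard and correct, and you are right that the entire difficulty lies in that last case, where a naive $B$-filtration of $\na(\nu)$ produces non-dominant weights. Invoking excellent filtrations (Polo--van der Kallen--Mathieu) is a legitimate way to finish, though it is a substantial black box.

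Your alternative ``Pieri'' route for $\GF$ is closer to Wang's original argument, but the sentence ``realize $\na(\nu)$ as the top layer of an iterated extension of the minuscule modules $\wedge^{j}(V)$'' is imprecise, and the step ``propagate the property down the extension by means of the Ext-criterion'' hides a genuine issue. One can indeed show directly that $\na(\mu)\otimes\wedge^{j}(V)$ has a good filtration (the non-dominant weights $\mu+\varepsilon_I$ that arise satisfy $\langle \mu+\varepsilon_I+\rho,\alpha_k^{\vee}\rangle=0$ for some simple $\alpha_k$, so the relevant higher inductions vanish). But going from ``$\na(\mu)\otimes\na(\tau)\otimes V$ is good-filtered'' to ``each layer $\na(\mu)\otimes\na(\nu_i)$ is good-filtered'' is not a formal consequence of the Ext-criterion: a good-filtered module can have a filtration whose individual layers are not good-filtered. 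The actual inductive argument requires organising the induction more carefully (e.g.\ on a suitable partial order on pairs, or by dualising and using Weyl filtrations simultaneously), and this is precisely the work done in \cite{Wa} and in Donkin's generalisation. So the sketch is honest about where the content is, but the Pieri paragraph as written would not yet constitute a proof.
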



\subsection{Relating $\Gq$ to $\GF$}\label{subsec:lem}

Define the induced $\GF$-module
$$
\Grk={\rm ind}^{\GF}_{\Gq}(\F).
$$
The basic properties of $\Grk$ (for more general reductive groups)
were described by Bendel, Nakano and Pillen, and then used to give
upper bounds on dimensions of cohomology of finite groups of Lie
type.

\begin{lem}\cite[Proposition~2.3]{BNP}\label{lem:BNP}
Let $M,N$ be rational $\GF$-modules. Then,
\begin{align*}
{\rm Ext}^i_{\Gq}(M,N) \cong {\rm Ext}^i_{\GF}(M,N\otimes \Grk),
\quad \forall i \ge 0.
\end{align*}
\end{lem}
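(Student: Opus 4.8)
The plan is to prove Lemma~\ref{lem:BNP} by combining Frobenius reciprocity for the induction functor ${\rm ind}^{\GF}_{\Gq}$ with the well-known fact that $\GF/\Gq$ is affine, so that induction from $\Gq$ to $\GF$ is exact and computes the higher $\Gq$-cohomology. First I would recall that for a closed subgroup scheme (here the finite group $\Gq$ inside the algebraic group $\GF$) the induction functor ${\rm ind}^{\GF}_{\Gq}$ is right adjoint to restriction, so there is a natural isomorphism ${\rm Hom}_{\GF}(M, {\rm ind}^{\GF}_{\Gq} N') \cong {\rm Hom}_{\Gq}(M|_{\Gq}, N')$ for any rational $\GF$-module $M$ and $\Gq$-module $N'$. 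Applying this with $N' = N|_{\Gq}$ and using the tensor identity ${\rm ind}^{\GF}_{\Gq}(N|_{\Gq}) \cong N \otimes {\rm ind}^{\GF}_{\Gq}(\F) = N \otimes \Grk$ (valid because $N$ is a rational $\GF$-module), one gets the $i=0$ case: ${\rm Hom}_{\Gq}(M,N) \cong {\rm Hom}_{\GF}(M, N\otimes\Grk)$.

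Next I would promote this to all $i \ge 0$ by deriving both sides. On the left, $\operatorname{Ext}^i_{\Gq}(M,N)$ is computed by taking an injective resolution of $N$ as a $\Gq$-module. On the right, one needs that ${\rm ind}^{\GF}_{\Gq}$ sends $\Gq$-injectives to $\GF$-injectives (true since it is right adjoint to the exact restriction functor) and, crucially, that ${\rm ind}^{\GF}_{\Gq}$ is itself exact. The exactness is where the geometry enters: since $\GF$ is reductive and $\Gq$ is finite, the quotient variety $\GF/\Gq$ is affine, and by the standard criterion (e.g.\ \cite[I.5.13]{J}) induction from a subgroup with affine quotient is exact. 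Therefore, applying the exact functor ${\rm ind}^{\GF}_{\Gq}$ to a $\Gq$-injective resolution $N \to I^{\bullet}$ yields a $\GF$-injective resolution $N\otimes\Grk \to {\rm ind}^{\GF}_{\Gq}(I^{\bullet})$, and taking ${\rm Hom}_{\GF}(M,-)$ of it computes $\operatorname{Ext}^i_{\GF}(M, N\otimes\Grk)$ while, by the $i=0$ isomorphism applied termwise and naturally, it simultaneously computes $\operatorname{Ext}^i_{\Gq}(M,N)$. The naturality of the adjunction isomorphism ensures the two computations agree as graded vector spaces, giving the claimed isomorphism for all $i$.

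The main obstacle I anticipate is verifying the tensor identity ${\rm ind}^{\GF}_{\Gq}(N|_{\Gq}) \cong N\otimes\Grk$ with enough care that it is genuinely an isomorphism of \emph{rational} $\GF$-modules (and functorial in $N$), together with being precise about the exactness input: one must check that $\GF/\Gq$ is an affine scheme and invoke the corresponding exactness theorem for induction, rather than merely the left-exactness that holds in general. Once those two facts are in hand, the derived-functor bookkeeping is routine. Since Lemma~\ref{lem:BNP} is quoted from \cite[Proposition~2.3]{BNP}, in the paper itself I would simply cite that reference; the sketch above is how one would reconstruct the argument.
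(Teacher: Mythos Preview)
The paper does not prove this lemma at all; it is simply quoted from \cite[Proposition~2.3]{BNP} without argument, exactly as you note in your final sentence. Your reconstruction---Frobenius reciprocity plus the tensor identity for the $i=0$ case, then exactness of ${\rm ind}^{\GF}_{\Gq}$ (via affineness of $\GF/\Gq$, which holds because $\Gq$ is finite) to upgrade to all $i$---is correct and is essentially the standard argument behind the cited result; there is nothing further to compare.
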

\begin{lem}\cite[Proposition~2.4]{BNP}\label{lem:BNP1}
As a $\GF$-module, $\Grk$ has a filtration with factors
$\na(\la)\otimes\na(-w_0\la)^{(r)}$ of multiplicity one for each
$\la\in X^+$.
\end{lem}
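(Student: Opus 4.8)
The plan is to realise $\Grk=\mathrm{ind}^{\GF}_{\Gq}(\F)$ as the coordinate ring of a homogeneous space, trivialise that space by the Lang map, and then restrict the classical Peter--Weyl filtration of $\F[\GF]$. Write $F=\mathrm{Fr}^r$, so that $\Gq=\GF^F$, and recall that for the finite subgroup $\Gq$ one has $\Grk\cong\F[\Gq\backslash\GF]$ as a $\GF$-module, with $\GF$ acting by right translation. Since $\GF=GL_n(\F)$ is connected, Lang's theorem shows that $L\colon\GF\to\GF$, $g\mapsto g^{-1}F(g)$, is a surjective morphism of varieties whose fibres are precisely the cosets $\Gq g$ (indeed $L(g)=L(h)$ iff $hg^{-1}\in\GF^F$); hence $L$ descends to an isomorphism of varieties $\Gq\backslash\GF\xrightarrow{\ \sim\ }\GF$. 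From $L(gx)=x^{-1}L(g)F(x)$ one reads off that, under this isomorphism, the right-translation action of $x\in\GF$ on $\Gq\backslash\GF$ becomes the twisted action $y\mapsto x^{-1}yF(x)$ on $\GF$.

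Consequently $\Grk$ is identified with $\F[\GF]$ on which $x\in\GF$ acts by $(x\cdot f)(y)=f(x^{-1}yF(x))$; equivalently, letting $\GF\times\GF$ act on $\F[\GF]$ by the two-sided translation $((x_1,x_2)\cdot f)(y)=f(x_1^{-1}yx_2)$, the module $\Grk$ is the restriction of $\F[\GF]$ along the embedding $\GF\hookrightarrow\GF\times\GF$, $z\mapsto(z,F(z))$, i.e.\ the graph of $F$. Now I would invoke the Peter--Weyl-type filtration of the coordinate ring of a reductive group (see \cite[II.4]{J}): $\F[\GF]$ has a filtration by $\GF\times\GF$-submodules with factors $\na(\la)\otimes\na(-w_0\la)$, each of multiplicity one, $\la\in X^+$. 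Restricting this filtration along $z\mapsto(z,F(z))$ yields a filtration of $\Grk$ by $\GF$-submodules; the factor $\na(\la)\otimes\na(-w_0\la)$ restricts to the vector space $\na(\la)\otimes\na(-w_0\la)$ on which $z$ acts by $z\otimes F(z)$, that is, to $\na(\la)\otimes\na(-w_0\la)^{(r)}$. This gives the asserted filtration, with each factor occurring exactly once. (Each factor has a good filtration by Lemma~\ref{lem:J.Wang} together with the stability of good filtrations under Frobenius twists, so $\Grk$ does too.)

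The argument is essentially formal once the two inputs are secured; the one place demanding care is the bookkeeping in the first two steps --- which coset space, which form of the Lang map, whether one restricts along the graph of $F$ or that graph pre-composed with the swap of the two $\GF$-factors --- so that the Frobenius twist lands on the second tensor factor as in the statement (in fact the final answer is independent of these conventions, since $\la\mapsto-w_0\la$ is an involution of $X^+$). The genuinely external ingredient is the precise shape, in particular the appearance of $-w_0\la$, of the Peter--Weyl filtration; if needed I would recall its derivation from the adjunction identifying $\F[\GF]$ with $\mathrm{ind}^{\GF\times\GF}_{\Delta\GF}(\F)$ and Kempf vanishing, which reduces the multiplicity $[\F[\GF]:\na(\la)\otimes\na(\mu)]$ to the multiplicity of the trivial module in $\Delta(\la)\otimes\Delta(\mu)$, namely $\delta_{\mu,-w_0\la}$.
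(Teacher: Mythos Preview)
The paper does not prove this lemma; it is quoted without proof from \cite[Proposition~2.4]{BNP}. Your argument is correct and is essentially the standard one behind that result: identify $\Grk$ with $\F[\Gq\backslash\GF]$, use the Lang isogeny to carry this to $\F[\GF]$ with the twisted conjugation action, and then restrict the Donkin--Koppinen filtration of $\F[\GF]$ (as in \cite[II.4.20]{J}) along the graph of the Frobenius. Your remark that the convention ambiguities wash out because $\la\mapsto -w_0\la$ is an involution on $X^+$ is exactly the right way to dispose of the bookkeeping.
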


The proof of the following useful proposition, which follows from
the above results of \cite{BNP}, was communicated to us by Cornelius
Pillen.

\begin{prop}\label{Pillen}
Let $N$ be a finite dimensional rational $\GF$-module admitting a
good filtration. Then
\begin{align}
{\rm dim}~{\rm Hom}_{\Gq}(\St,N)
=\sum_{\la\in X^+}[N\otimes\na(\la):\na((q-1)\rho+q\la)].\label{eqn:Pillen}
\end{align}
\end{prop}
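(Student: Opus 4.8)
The plan is to combine the three preliminary lemmas in sequence. First I would apply Lemma~\ref{lem:BNP} with $M=\St$ (regarded as a rational $\GF$-module via $L((q-1)\rho)$) and with $i=0$ to get
\[
{\rm Hom}_{\Gq}(\St,N)\cong {\rm Hom}_{\GF}(\St,N\otimes\Grk).
\]
The key structural input is then Lemma~\ref{lem:BNP1}, which says $\Grk$ has a filtration with one factor $\na(\la)\otimes\na(-w_0\la)^{(r)}$ for each $\la\in X^+$. Tensoring this filtration with $N$ and taking $\GF$-homomorphisms from $\St$, I want to argue the functor ${\rm Hom}_{\GF}(\St,-)$ is exact on the relevant filtration, so that
\[
{\rm dim}\,{\rm Hom}_{\GF}(\St,N\otimes\Grk)=\sum_{\la\in X^+}{\rm dim}\,{\rm Hom}_{\GF}\big(\St,N\otimes\na(\la)\otimes\na(-w_0\la)^{(r)}\big).
\]
Exactness here should follow because $\St=L((q-1)\rho)=\Delta((q-1)\rho)=\na((q-1)\rho)$ is simultaneously a Weyl module and a dual Weyl module (the Steinberg module is both), so by Lemma~\ref{lem:Janzten} higher Ext-groups from $\St=\Delta((q-1)\rho)$ into any module with a good filtration vanish; one needs that each $N\otimes\na(\la)\otimes\na(-w_0\la)^{(r)}$ has a good filtration, which is Lemma~\ref{lem:J.Wang} together with the fact that a Frobenius twist of a good filtration is a good filtration and that $N$ has one by hypothesis. (Some care is needed since the filtration of $\Grk$ is infinite, but after tensoring with the finite-dimensional $N$ and restricting to any fixed weight only finitely many terms contribute, so the sum is effectively finite in each degree.)

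Next I would identify each summand. Since $\St=\na((q-1)\rho)$, the Steinberg tensor identity type manipulation is not what is needed; rather, I use that $\Delta((q-1)\rho)\otimes\na(-w_0\la)^{(r)}\cong$ a module whose good filtration multiplicities are governed by a shift. Concretely, the cleanest route is: for a module $P$ with good filtration, ${\rm dim}\,{\rm Hom}_{\GF}(\St,P)={\rm dim}\,{\rm Hom}_{\GF}(\Delta((q-1)\rho),P)=[P:\na((q-1)\rho)]$ by Lemma~\ref{lem:Janzten}. Applying this with $P=N\otimes\na(\la)\otimes\na(-w_0\la)^{(r)}$ gives $[N\otimes\na(\la)\otimes\na(-w_0\la)^{(r)}:\na((q-1)\rho)]$. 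The remaining point is the translation
\[
[N\otimes\na(\la)\otimes\na(-w_0\la)^{(r)}:\na((q-1)\rho)]=[N\otimes\na(\la):\na((q-1)\rho+q\la)],
\]
which I expect to follow from the linkage/translation principle for good filtrations: tensoring a good filtration with the Frobenius twist $\na(-w_0\la)^{(r)}=L(-w_0\la)^{(r)}$ (note $\na$ of a weight equals $L$ when the weight is a multiple of $p^r$ applied after twist, i.e.\ $\na(-w_0\la)^{(r)}$ is simple) shifts $\na(\mu)$ to $\na(\mu+q\la)$ precisely when $\mu$ is "far enough" from walls — and $(q-1)\rho$ is sufficiently dominant for this. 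This uses the classical fact that $\na(\mu)\otimes L(\nu)^{(r)}$ has a good filtration with top factor $\na(\mu+q\nu)$ and all other factors $\na(\mu')$ with $\mu'$ strictly smaller, so only $\na((q-1)\rho+q\la)$ in $N\otimes\na(\la)$ can contribute $\na((q-1)\rho)$ after the twist.

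The main obstacle I anticipate is making the infinite-filtration bookkeeping in Lemma~\ref{lem:BNP1} rigorous together with the exactness of ${\rm Hom}_{\GF}(\St,-)$ — i.e.\ justifying that one may pass from the filtration of $\Grk$ to a sum over $\la\in X^+$ term by term, including convergence. The cleanest fix is a weight-space argument: fix the hypothesis that $N$ is finite dimensional, observe $N\otimes\na(\la)$ can contain $\na((q-1)\rho+q\la)$ only if the highest weight $(q-1)\rho+q\la$ is a weight of $N\otimes\na(\la)$, which for fixed $N$ forces $\la$ to lie in a bounded region, so the sum on the right of \eqref{eqn:Pillen} has only finitely many nonzero terms and all the homological algebra takes place inside a finite-dimensional subquotient. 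The secondary technical point — the shift identity in the last displayed equation — is standard but should be stated carefully, invoking that $(q-1)\rho$ is $q$-restricted and regular so the tensor-with-Frobenius-twist behaves as expected.
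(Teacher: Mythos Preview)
Your overall architecture matches the paper's: apply Lemma~\ref{lem:BNP} with $M=\St$, feed in the filtration of $\Grk$ from Lemma~\ref{lem:BNP1}, use vanishing of higher Ext (via Lemma~\ref{lem:Janzten}) to make ${\rm Hom}_{\GF}(\St,-)$ additive on the filtration, and then identify each summand with a good-filtration multiplicity. The finiteness argument you sketch is also essentially the paper's.

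The gap is in your justification of the ``shift identity''
\[
[N\otimes\na(\la)\otimes\na(-w_0\la)^{(r)}:\na((q-1)\rho)]
=[N\otimes\na(\la):\na((q-1)\rho+q\la)].
\]
You assert that $\na(-w_0\la)^{(r)}=L(-w_0\la)^{(r)}$ and then appeal to a ``classical fact'' about $\na(\mu)\otimes L(\nu)^{(r)}$. But $\na(-w_0\la)$ is \emph{not} simple for general $\la\in X^+$; it is a costandard module whose socle is $L(-w_0\la)$ and which typically has many other composition factors. So the simplicity claim is false, and the fact you invoke does not apply to $\na(-w_0\la)^{(r)}$. Even granting your stated fact for simple twists, the conclusion that only $\mu=(q-1)\rho+q\la$ contributes is not established by your dominance argument.

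The paper's remedy is to avoid analyzing $\na(-w_0\la)^{(r)}$ on the right and instead move it across by duality: since $(\na(-w_0\la)^{(r)})^*\cong\De(\la)^{(r)}$, one has
\[
{\rm Ext}^i_{\GF}\!\big(\St,\,N\otimes\na(\la)\otimes\na(-w_0\la)^{(r)}\big)
\cong {\rm Ext}^i_{\GF}\!\big(\St\otimes\De(\la)^{(r)},\,N\otimes\na(\la)\big),
\]
and then invokes the standard identity $\St\otimes\De(\la)^{(r)}\cong\De((q-1)\rho+q\la)$ (see \cite[II,~3.19]{J}). Now Lemma~\ref{lem:Janzten} applied to the good-filtration module $N\otimes\na(\la)$ (this is where Lemma~\ref{lem:J.Wang} is actually used) gives both the Ext-vanishing for $i\ge 1$ and, for $i=0$, exactly the multiplicity $[N\otimes\na(\la):\na((q-1)\rho+q\la)]$. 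This simultaneously proves your shift identity and removes any need to know that $\na(-w_0\la)^{(r)}$ has a good filtration. Replacing your paragraph on the shift identity with this adjunction argument makes the proof complete and identical in substance to the paper's.
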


\begin{proof}
We first observe by Lemma~\ref{lem:Janzten} and \cite[II, 3.19]{J} that
\begin{align}
{\rm Ext}^i_{\GF} &(\St,N\otimes\na(\la)\otimes\na(-w_0\la)^{(r)})\notag\\
&\cong{\rm Ext}^i_{\GF}(\St\otimes\De(\la)^{(r)},N\otimes\na(\la))\notag\\
&\cong{\rm Ext}^i_{\GF}(\De((q-1)\rho+q\la),N\otimes\na(\la))\notag\\
&=
\left\{
\begin{array}{ll}
{\rm Hom}_{\GF}(\De((q-1)\rho+q\la),N\otimes\na(\la)),& \text{ if } i=0\\
0,& \text{ if } i>0.
\end{array}
\right.
\label{eqn:Pillen1}
\end{align}
It follows that ${\rm Hom}_{\GF}(\St,N\otimes\na(\la)\otimes\na(-w_0\la)^{(r)})$
is nonzero only  for finitely many $\la\in X^+$.

Let $\mathcal G$ be a $\Gq$-module which has a
$\na(\la)\otimes\na(-w_0\la)^{(r)}$-filtration.
For a $\GF$-submodule $S$ of $\mathcal G$ having a
$\na(\la)\otimes\na(-w_0\la)^{(r)}$-filtration,
we define a $\GF$-module $Q$ by the short exact sequence
$$
0\longrightarrow S \longrightarrow \mathcal G \longrightarrow Q \longrightarrow 0.
$$
Then $Q\cong\mathcal G/M$ also has a
$\na(\la)\otimes\na(-w_0\la)^{(r)}$-filtration.
The short exact sequence induces a long exact sequence with initial terms
\begin{align*}
0\longrightarrow {\rm Hom}_{\GF}(\St, N\otimes S)
& \longrightarrow {\rm Hom}_{\GF}(\St, N\otimes \mathcal G)  \\
&\longrightarrow {\rm Hom}_{\GF}(\St, N\otimes Q)
\longrightarrow {\rm Ext}^1_{\GF}(St, N\otimes S)
\end{align*}
where the last term ${\rm Ext}^1$ vanishes by~(\ref{eqn:Pillen1}).
Hence we obtain the following identity:
\begin{align*}
 \dim {\rm Hom}_{\GF}(\St, N\otimes \mathcal G)
 =\dim {\rm Hom}_{\GF}(\St, N\otimes S) +
 \dim {\rm Hom}_{\GF}(\St, N\otimes Q).
\end{align*}
By repeatedly applying this identity, we obtain that
\begin{align} \label{eq:infsum}
\dim {\rm Hom}_{\GF}(\St, N\otimes\Grk)
& =\sum_{\la\in X^+} \dim {\rm Hom}(\St, N\otimes\na(\la)\otimes\na(-w_0\la)^{(r)})
\end{align}
where all but finitely many summands on the right-hand side are zero.

By  \eqref{eqn:Pillen1}, \eqref{eq:infsum}, Lemma~\ref{lem:Janzten},
Lemma~\ref{lem:J.Wang}, and Lemma~\ref{lem:BNP} (for $M=\St$), we obtain that
\begin{align*}
\dim {\rm  Hom}_{\Gq}(\St, N)
=&\sum_{\la\in X^+}\dim {\rm Hom}
(\St, N\otimes\na(\la)\otimes\na(-w_0\la)^{(r)})  \\
=&\sum_{\la\in X^+}\dim {\rm Hom}_{\GF}
(\De((q-1)\rho+q\la),N\otimes\na(\la))\\
=&\sum_{\la\in X^+}
[N\otimes\na(\la):\na((q-1)\rho+q\la)].
\end{align*}
The proposition is proved.
\end{proof}


\section{The Steinberg module multiplicity in a tensor product}
\label{sec:Stmult}

In this section, we will compute the multiplicity of the Steinberg
module $\St$ of $GL_n(q)$ in $\sym\otimes \wedge^{\mu'}(V)\otimes
{\rm Det}^k$ and $\sym\otimes L(\mu)\otimes {\rm Det}^k$, for $0\leq
k\leq q-2$ and certain partitions $\mu=(\mu_1,\ldots,\mu_n)$ of
length $\le n$.


\subsection{Some notations}\label{sec:notations}

We introduce the following notations:
\begin{align*}
\var_i &=  (0,\ldots,0,1,0,\ldots,0)\in X,   \\
\om_m &=  (1^m)=(1,\ldots,1,0,\ldots,0)\in X^+, \quad 1\leq m\leq n.
\end{align*}
For $\la\in X^+$, set $|\la|=\sum_i\la_i$. We denote by
$L(\la)_{\C}$ the irreducible representation of highest weight $\la$
of the general linear Lie algebra $\mathfrak{gl}_n(\C)$, and $[M:
L(\la)_{\C}]$ the multiplicity of $L(\la)_{\C}$ in a semisimple
$\mathfrak{gl}_n(\C)$-module $M$.

Let $\Z_+$ denote the set of nonnegative integers.  We denote by
$e_k(x_1,\ldots, x_n)$ the $k$-th elementary symmetric polynomial
for $k \in \Z_+$,  and denote by $e_\nu(x_1,\ldots, x_n)$ the
elementary symmetric polynomial associated to a partition $\nu$
whose first part $\nu_1 \leq n$.  For a partition $\mu$, define the length $\ell(\mu)$ to be
the number of nonzero parts in $\mu$.
We denote by $m_\mu(x_1,\ldots,
x_n)$  the monomial symmetric polynomial associated to a partition
$\mu$ with $\ell(\mu)\leq n$. Denote by $\mu'$ the conjugate partition of $\mu$.

For a formal series $f(t)\in\Z[[t]]$,
denote by $[t^a]  f(t)$ the coefficient of $t^a$
in $f(t)$ for $a\in\Z_+$.

Let $\Det$ denote the one-dimensional determinant $\Gq$-module. Note
that $\Det\cong  \wedge^n(V)\cong\na(\om_n)$, and that $\Det^{q-1}$
is the trivial module.

For a graded vector space $N^{\bullet}
=\oplus_{i}N^{i}$, we define its Hilbert series to be
$$
H(N^{\bullet};t)=\sum_it^i\dim N^i.
$$
Recall the Steinberg module $\St$ of $\Gq$ is absolutely irreducible
and projective. For a graded $GL_n(q)$-module $N^{\bullet}
=\oplus_{i}N^{i}$, let us denote by
\begin{align*}
H_{\St}(N^{\bullet};t)
=H(\text{Hom}_{GL_n(q)}(\St, N^{\bullet});t)
=\sum_{i}  t^i \dim
\text{Hom}_{GL_n(q)} (\St, N^{i})
\end{align*}
the graded multiplicity of $\St$ in $N^\bullet$.
Similarly, we denote by $H_{\St}(N^{\bullet};t,s)$ (in two variables
$t$ and $s$) the graded multiplicity for $\St$
in a bi-graded $\Gq$-module $N^\bullet=\oplus_{i,j}N^{i,j}$, that is,
$$
H_{\St}(N^{\bullet};t,s)
=\sum_{i,j}  t^i s^j\dim
\text{Hom}_{GL_n(q)} (\St, N^{i,j}).
$$


\subsection{The multiplicity of $\St$ in $S^{\bullet}(V)\otimes \wedge^m(V)\otimes {\Det}^k$.}

We shall use the connection to the algebraic group $\GF$ to
determine the graded multiplicity of the Steinberg module $\St$ in
 $\sym\otimes \wedge^m(V)\otimes \Det^k$.
\begin{thm}\label{thm:symwedg}
Suppose $0\leq m\leq n$. 

(1) If $1\leq k\leq q-2$, then the graded multiplicity of $\St$ in
 $\sym\otimes \wedge^m(V)\otimes \Det^k$ is
\begin{align*}
H_{\St} \big(\sym\otimes \wedge^m(V)\otimes \Det^k;t \big)
=\ds\frac{t^{-n+(q-k)\frac{q^n-1}{q-1}}}{\prod^n_{i=1}(1-t^{q^i-1})}
~e_m(t^{-1},t^{-q},\ldots, t^{-q^{n-1}}).
\end{align*}

(2) 
The graded multiplicity of $\St$
in $\sym\otimes\wedge^m(V)$ is given by
\begin{align*}
&H_{\St} \big(\sym\otimes \wedge^m(V);t \big)\\
&=\ds\frac{t^{-n+\frac{q^n-1}{q-1}}}{\prod^n_{i=1}(1-t^{q^i-1})}
\Big((1-t^{q^n-1})~e_m(t^{-1},t^{-q},\ldots,t^{-q^{n-2}})
+t^{q^n-1}e_m(t^{-1},t^{-q},\ldots,t^{-q^{n-1}})\Big).
\end{align*}
\end{thm}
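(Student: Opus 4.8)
The plan is to apply Proposition~\ref{Pillen} with $N = \sym \otimes \wedge^m(V) \otimes \Det^k$, which has a good filtration since each tensor factor does (a twist by $\Det^k = \na(\om_n)^{\otimes k}$ preserves good filtrations by Lemma~\ref{lem:J.Wang}, and $\sym$, $\wedge^m(V) = \na(\om_m)$ are standard). So the problem reduces to computing the sum $\sum_{\la \in X^+} [\sym \otimes \wedge^m(V) \otimes \Det^k \otimes \na(\la) : \na((q-1)\rho + q\la)]$, graded by the symmetric-algebra degree. By Lemma~\ref{lem:Janzten} each such multiplicity equals $\dim \mathrm{Hom}_{\GF}(\De((q-1)\rho+q\la), \sym \otimes \na(\om_m) \otimes \na(\om_n)^{\otimes k} \otimes \na(\la))$. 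The first key step is therefore to understand the good-filtration multiplicities of a tensor product of $\sym$ with several dominant induced modules; since Euler characteristics of good filtrations are computed by characters, which in turn are governed by the characteristic-zero theory, I expect to translate $[M : \na(\mu)]$ into the classical multiplicity $[M_\C : L(\mu)_\C]$ for the corresponding $\mathfrak{gl}_n(\C)$-module, where $\sym$ becomes $S^\bullet(\C^n)$, $\na(\om_m)$ becomes $\wedge^m(\C^n)$, and $\na(\om_n)$ becomes the determinant.

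The second, and I expect main, step is the combinatorial evaluation. After passing to characteristic zero the task is: for each fixed degree $d$ in $\sym$, sum over all $\la \in X^+$ the multiplicity of the irreducible with highest weight $(q-1)\rho + q\la$ in $S^d(\C^n) \otimes \wedge^m(\C^n) \otimes (\det)^{\otimes k} \otimes L(\la)_\C$. Since $S^d(\C^n) = L((d,0,\ldots,0))_\C$ and $\wedge^m(\C^n) = L(\om_m)_\C$, and tensoring with $(\det)^{\otimes k}$ shifts all parts by $k$, this is a Littlewood--Richardson / Pieri count: one wants the number of ways $(q-1)\rho + q\la - k(1^n)$ arises from $\la$ by adding a horizontal strip of size $d$ and then a vertical strip of size $m$ (up to the LR rule when both are done simultaneously). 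The crucial arithmetic observation is that $(q-1)\rho + q\la + k(1^n)$ together with the congruence classes mod $q$ of the $\rho_i = n-i$ forces the horizontal and vertical strips to be distributed in an essentially unique pattern among the rows, because the gaps $q-1$ between consecutive coordinates of $(q-1)\rho$ are large. I would make this precise by fixing, for each $i$, how much of the horizontal strip and vertical strip lands in row $i$; the constraint $\la_i \geq \la_{i+1}$ together with the base weight $(q-1)\rho$ should pin down $\la$ uniquely given these choices, and the grading variable $t$ then tracks the total strip size $d = \sum a_i$ with $a_i$ contributing a factor $t^{a_i}$ that, after the change of variables forced by the $q$-adic structure, produces exactly the geometric series $\prod_{i=1}^n (1-t^{q^i-1})^{-1}$, while the vertical-strip choices produce the elementary symmetric polynomial $e_m(t^{-1}, t^{-q}, \ldots, t^{-q^{n-1}})$.

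For part (1), where $1 \le k \le q-2$, the extra shift by $k$ keeps all the relevant weights in the range where the unique-distribution phenomenon is clean, so the count is a single product times $e_m$, and the overall power of $t$, namely $-n + (q-k)\frac{q^n-1}{q-1}$, is bookkeeping: it records the minimal degree $d$ at which a nonzero $\mathrm{Hom}$ first occurs (the ``$\frac{q^n-1}{q-1} = 1 + q + \cdots + q^{n-1}$'' being the sum that appears when $\la$ starts from its minimal admissible value, scaled by $q-k$), and the $t^{-n}$ plus the negative powers inside $e_m$ come from the vertical-strip coordinates being read in decreasing $q$-adic order. For part (2), the case $k=0$ (equivalently $k=q-1$ after a $\Det$-twist, but the untwisted case needs separate handling at the boundary) is genuinely different because the last coordinate $\la_n$ can be $0$, so the strip can or cannot reach the bottom row; this bifurcation is exactly the source of the two-term expression $(1-t^{q^n-1}) e_m(t^{-1},\ldots,t^{-q^{n-2}}) + t^{q^n-1} e_m(t^{-1},\ldots,t^{-q^{n-1}})$, the first term covering configurations where the $n$-th variable $t^{-q^{n-1}}$ is excluded from the vertical strip and the second where a full extra ``column'' of length $q^n-1$ is present. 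The main obstacle is thus organizing the case analysis of how strips distribute across rows cleanly enough that the bookkeeping collapses to these closed forms; I would isolate this as a lemma about counting pairs (horizontal strip, vertical strip) added to a partition $\la$ to reach a weight congruent to $(q-1)\rho$ mod $q$, prove that lemma by an explicit bijection with monomials, and then assemble parts (1) and (2) as corollaries by specializing $k$.
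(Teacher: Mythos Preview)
Your proposal is essentially the same approach as the paper's proof: apply Proposition~\ref{Pillen} to reduce to a characteristic-zero multiplicity, use Pieri's formula twice (once for the horizontal strip from $S^a(V)$, once for the vertical strip from $\wedge^m(V)$), and then count solutions via a change of variables $\bar\lambda_i$ that converts the constraints into a free choice in $\Z_+^n$, producing the product $\prod_i(1-t^{q^i-1})^{-1}$ and the factor $e_m(t^{-1},\ldots,t^{-q^{n-1}})$. One small correction for part~(2): the paper argues with $k=q-1$ rather than $k=0$, and the bifurcation is not that $\lambda_n$ can be zero but rather whether the vertical strip hits row $n$ (i.e., whether $i_m=n$), which forces $\lambda_n\ge 2$ instead of $\lambda_n\ge 1$ and accounts for the extra $t^{q^n-1}$; your description of the two terms as ``$n$-th variable excluded'' versus ``included'' is nonetheless the right dichotomy.
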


\begin{proof}
Let us fix $a \in \Z_+$. Observe that $S^a(V)\cong\na(a\om_1)$, and
$\wedge^m(V)\cong\na(\om_m)$. It follows by Lemma~\ref{lem:J.Wang}
that $S^a(V)\otimes\wedge^m(V)\otimes\Det^k$ has a good filtration.
By Proposition~\ref{Pillen}, we turn the problem into a multiplicity
problem in characteristic zero:
\begin{align}
&{\rm dim}~{\rm Hom}_{GL_n(q)}(\St,S^a(V)\otimes\wedge^m(V)\otimes\Det^k)\notag\\
&=\sum_{\la\in X^+} \left[\na(a\om_1)\otimes\na(\om_m)\otimes\na(\om_n)^{\otimes k}
\otimes\na(\la):\na((q-1)\rho+q\la) \right]\notag\\
&=\sum_{\la\in X^+} \big[L(a\om_1)_{\C}\otimes L(\om_m)_{\C}\otimes
L(\om_n)_{\C}^{\otimes k}\otimes L(\la)_{\C}:L((q-1)\rho+q\la)_{\C} \big].
\label{eqn:wedg1}
\end{align}

By applying Pieri's formula twice (cf.
\cite[Proposition~15.25]{FH}), we deduce that
\begin{align*}
&L(a\om_1)_{\C}\otimes L(\om_m)_{\C}\otimes
L(\om_n)_{\C}^{\otimes k}\otimes L(\la)_{\C}\\
&\cong\ds\oplus_{ a_i\in\Z_+, a_1+\cdots+a_n=a,\la_i+k+a_i\leq\la_{i-1}+k}
L(\om_m)_{\C}\otimes L(\la_1+k+a_1,\ldots,\la_n+k+a_n)_{\C}\\
&\cong\oplus
L((\la_1+k+a_1,\ldots,\la_n+k+a_n)+\var_{i_1}+\cdots+\var_{i_m})_{\C},
\end{align*}
where the summation is over the tuples $(a_1,\ldots,a_n)$ and
$(i_1,\ldots,i_m)$ satisfying
\eqref{cond:wedgPieri1}-\eqref{cond:wedgPieri2} below:
\begin{align}
& 1\leq i_1<\cdots<i_m\leq n,\label{cond:wedgPieri1}\\
& a_1  +\ldots +a_n =a, \label{cond:Pieri a}\\
& a_1,  \ldots,a_n\in\Z_+, \label{cond:Pieri b}\\
&  \la_i+k+a_i  \leq\la_{i-1}+k, \label{cond:Pieri c}\\
&  \la+k\om_n+(a_1,\ldots,a_n)+\var_{i_1}+\cdots
 +\var_{i_m}\in X^+.\label{cond:wedgPieri2}
\end{align}

Hence, the multiplicity $[L(a\om_1)_{\C}\otimes L(\om_m)_{\C}\otimes
L(\om_n)_{\C}^{\otimes k}\otimes
L(\la)_{\C}:L((q-1)\rho+q\la)_{\C}]$ is the same as the number of the
tuples $(a_1,\ldots,a_n)$ and $(i_1,\ldots,i_m)$ which satisfy
(\ref{cond:wedgPieri1})--(\ref{cond:Pieri c}) and the following
additional equation
\begin{equation}  \label{eq:weight}
\la+k\om_n
+(a_1,\ldots,a_n)+\var_{i_1}+\ldots+\var_{i_m}=(q-1)\rho+q\la.
\end{equation}
Note that  (\ref{cond:wedgPieri2}) is implied by \eqref{eq:weight}.
Regarding \eqref{eq:weight} as a defining equation for
$(a_1,\ldots,a_n)$, we are reduced to counting the cardinality of
the set  $\Ga^{m}_{\la}$ which consists of the tuples
$(i_1,\ldots,i_m)$ satisfying (\ref{cond:wedgPieri1}) and the
following additional conditions
\eqref{cond:wedg1}-\eqref{cond:wedg5} (recall from \eqref{eq:rhoi}
the notation $\rho_i$):
\begin{align}
&(q-1)|\rho|+(q-1)|\la|-nk-m=a, \label{cond:wedg1}\\
&(q-1)\rho_i+(q-1)\la_i-k\geq 0, \quad \text{ for } i\neq i_1,\ldots,i_m,
 \label{cond:wedg2}\\
&(q-1)\rho_i+(q-1)\la_i-k-1\geq 0, \quad \text{ for }  i= i_1,\ldots,i_m,
 \label{cond:wedg3}\\
&(q-1)\rho_i+q\la_i\leq\la_{i-1}+k, \quad \text{ for } i\neq i_1,\ldots,i_m,
  \label{cond:wedg4}\\
&(q-1)\rho_i+q\la_i-1\leq\la_{i-1}+k, \quad \text{ for }  i=i_1,\ldots,i_m, 2\leq i\leq n.
   \label{cond:wedg5}
\end{align}
Note here that \eqref{cond:Pieri a} gives rise to
\eqref{cond:wedg1}, \eqref{cond:Pieri b} gives rise to
\eqref{cond:wedg2} and \eqref{cond:wedg3}, while  \eqref{cond:Pieri
c} gives rise to \eqref{cond:wedg4} and \eqref{cond:wedg5}.
Hence, we have
\begin{align}
\sum_{\la\in X^+} & \big[L(a\om_1)_{\C}\otimes L(\om_m)_{\C}
\otimes L(\la)_{\C}:L((q-1)\rho+q\la)_{\C} \big]\notag\\
&=\sum_{\la\in X^+}\# \Ga^{m}_{\la} \notag\\
&=\#\{(\la,(i_1,\ldots,i_m))~|~
\la\in X^+,(i_1,\ldots,i_m)\in\Ga^{m}_{\la}\}\notag\\
&=\sum_{1\leq i_1<\cdots<i_m\leq n}
\#\{\la~|~\la\in X^+\text{ satisfies }
(\ref{cond:wedg1})\text{-}(\ref{cond:wedg5})\}.
 \label{eqn:wedg2}
\end{align}

(1) Suppose $1\leq k\leq q-2$. For fixed $1\leq
i_1<i_2<\ldots<i_m\leq n$, let us examine the conditions
(\ref{cond:wedg1})-(\ref{cond:wedg5}) closely. It follows by
\eqref{cond:wedg2}-\eqref{cond:wedg3} for $i=n$ that $\la_n \ge 1$
and hence $\la_i \ge 1$ for all $i=1, \ldots, n$ for $\la \in X^+$;
Moreover, the inequalities $\la_i \ge 1$ for all $i$ guarantee the
validity of \eqref{cond:wedg2}-\eqref{cond:wedg3} in general. Set
\begin{equation*}
\bar{\la}_n=\la_n-1
\end{equation*}
and set,
for $2\leq i\leq n$,
\begin{align*}
\bar{\la}_{i-1}=
\left\{\begin{array}{cc}
\la_{i-1}+k-((q-1)\rho_i+q\la_i-1), \text{ if } i=i_1,\ldots,i_m\\
\la_{i-1}+k-((q-1)\rho_i+q\la_i), \text{ otherwise}.
\end{array}
\right.
\end{align*}
Then the conditions~(\ref{cond:wedg2})-(\ref{cond:wedg5}) hold for
$\la \in X^+$ if and only if
$(\bar{\la}_1,\ldots,\bar{\la}_n)\in\Z_+^n$. By a direct
computation, one further checks that the condition
\eqref{cond:wedg1} is reformulated in terms of the $\bar{\la}_i$'s
as
\begin{equation}  \label{eq:bar la}
a=-n+(q-k) \sum^{n}_{i=1}q^{i-1}-\sum^m_{j=1}
q^{i_j-1}+\sum^n_{i=1}(q^i-1)\bar{\la}_i.
\end{equation}

Hence the equation (\ref{eqn:wedg2}) can be rewritten as
\begin{align*}
\sum_{\la\in X^+} &
\big[L(a\om_1)_{\C}\otimes
L(\om_m)_{\C}\otimes L(\om_n)_{\C}^{\otimes k}
\otimes L(\la)_{\C}:L((q-1)\rho+q\la)_{\C} \big]\\
&=\sum_{1\leq i_1<\cdots<i_m\leq n} \#~\Big
\{(\bar{\la}_1,\ldots,\bar{\la}_n)\in\Z_+^n \text{ that satisfy }
 \eqref{eq:bar la}
 \Big \}\notag\\
&=[t^a] \, \frac{t^{-n+(q-k)\frac{q^n-1}{q-1}}}{\prod^n_{i=1}(1-t^{q^i-1})}
e_m(t^{-1},t^{-q},\ldots,t^{-q^{n-1}}).
\end{align*}
This together with~(\ref{eqn:wedg1}) implies Part (1) of the theorem.

(2) Suppose $k=q-1$. (Note that $\Det^0 \cong \Det^{q-1}$, and we
could also prove Part (2) of the theorem by arguing using $k=0$).
The argument here is similar to (1) above, and so it will be
sketchy. For fixed $1\leq i_1<i_2<\ldots<i_m\leq n$, we set
\begin{align*}
\bar{\la}_n=
\left\{\begin{array}{l}
\la_n-1, \text{ if } i_m\leq n-1\\
\la_n-2, \text{ if } i_m=n.
\end{array}
\right.
\end{align*}
and set, for $2\leq i\leq n$,
\begin{align*}
\bar{\la}_{i-1}=
\left\{\begin{array}{l}
\la_{i-1}+(q-1)-((q-1)\rho_i+q\la_i-1), \text{ if } i=i_1,\ldots,i_m\\
\la_{i-1}+(q-1)-((q-1)\rho_i+q\la_i), \text{ otherwise}.
\end{array}
\right.
\end{align*}
Again it can be verified as before that the
conditions~(\ref{cond:wedg2})-(\ref{cond:wedg5}) hold if and only if
$(\bar{\la}_1,\ldots,\bar{\la}_n)\in\Z_+^n$. Then (\ref{eqn:wedg2})
can be rewritten as
{\allowdisplaybreaks
\begin{align*}
&\sum_{\la\in X^+}[L(a\om_1)_{\C}\otimes L(\om_m)_{\C}
\otimes L(\la)_{\C}:L((q-1)\rho+q\la)_{\C}]\\
&=\sum_{1\leq i_1<\cdots<i_m\leq n-1}
\#~\Big\{(\bar{\la}_1,\ldots,\bar{\la}_n)\in\Z_+^n~\Big|~a
 =-n+\sum^{n}_{i=1}q^{i-1}-\sum^m_{j=1} q^{i_j-1}
 +\sum^n_{i=1}(q^i-1)\bar{\la}_i\Big\}\notag\\
&+
\sum_{1\leq i_1<\cdots<i_m=n}
\#~\Big\{(\bar{\la}_1,\ldots,\bar{\la}_n)\in\Z_+^n~\Big|~a=-n+\sum^{n}_{i=1}q^{i-1}+(q^n-1)\\
&\qquad\qquad\qquad\qquad\qquad\qquad\qquad\qquad\qquad
-\sum^m_{j=1} q^{i_j-1}+\sum^n_{i=1}(q^i-1)\bar{\la}_i\Big\}\notag\\
&=[t^a] \frac{t^{-n+\frac{q^n-1}{q-1}}}{\prod^n_{i=1}(1-t^{q^i-1})}
e_m(t^{-1},t^{-q},\ldots,t^{-q^{n-2}})\\
&+[t^a] \frac{t^{-n+\frac{q^n-1}{q-1}}}{\prod^n_{i=1}(1-t^{q^i-1})}t^{q^n-1}
( e_{m}(t^{-1},t^{-q},\ldots,t^{-q^{n-2}},t^{-q^{n-1}})-e_m(t^{-1},t^{-q},\ldots,t^{-q^{n-2}}))\\
&=[t^a] \ds \frac{t^{-n+\frac{q^n-1}{q-1}}}{\prod^n_{i=1}(1-t^{q^i-1})}
\Big((1-t^{q^n-1})~e_m(t^{-1},t^{-q},\ldots,t^{-q^{n-2}})\\
&\qquad\qquad\qquad\qquad\qquad\qquad\qquad\qquad
+t^{q^n-1}~e_m(t^{-1},t^{-q},\ldots,t^{-q^{n-1}})\Big).
\end{align*}
  }
Therefore together with (\ref{eqn:wedg1}) we have proved Part~(2) of the theorem.
\end{proof}

\begin{rem}
Observe that $\sym\otimes\wedg\otimes \Det^k$ is naturally a
bi-graded $GL_n(q)$-module. Theorem~\ref{thm:symwedg}(2) can be
converted into a formula for the bi-graded multiplicity of $\St$ in
$\sym\otimes\wedg$ as follows:
\begin{align*}
H_{\St} &\big(\sym\otimes\wedg;t,s\big)   \\
&= \frac{t^{-n+\frac{q^n-1}{q-1}}}{\prod^n_{i=1}(1-t^{q^i-1})}
\Big((1-t^{q^n-1}) \prod^{n-2}_{i=0}(1+ st^{q^{-i}})
+t^{q^n-1}  \prod^{n-1}_{i=0}(1+ st^{q^{-i}})   \Big)
  \\
&=  \frac{t^{-n}
(st^{q^n-1}+t^{q^{n-1}})
\prod^{n-2}_{i=0}(s+t^{q^i})}{\prod^n_{i=1}(1-t^{q^i-1})}.
\end{align*}
Similarly (and more easily),  Theorem~\ref{thm:symwedg}(1) is
converted into the following formula for $1\le k \le q-2$:
\begin{align*}
H_{\St}(\sym\otimes\wedg\otimes\Det^k;t,s)
=  t^{-n+(q-1-k)\frac{q^n-1}{q-1}} \cdot
\frac{\prod^{n-1}_{i=0}(s+t^{q^i})}{\prod^n_{i=1}(1-t^{q^i-1})}.
\end{align*}
In this way, we obtain a new proof of \cite[Theorem~C]{WW}, which
was in turn a generalization of the earlier work \cite{Mi, MP}
(where $q$ is assumed to be a prime).
\end{rem}
There is an isomorphism of $GL_n(q)$-modules $\wedge^{n-m}(V)\cong
\wedge^{m}(V)^*\otimes\wedge^n(V)$ and $\wedge^n(V)\cong \Det$,
where $W^*$ denotes the dual module of a $\Gq$-module $W$. Hence, we
have an isomorphism of $GL_n(q)$-modules
\begin{equation}\label{eqn:wedgdual}
\wedge^m(V)^* \cong \wedge^{n-m}(V)\otimes\Det^{-1}\cong
\wedge^{n-m}(V)\otimes\Det^{q-2}, \qquad 0\leq m\leq n.
\end{equation}
Theorem~\ref{thm:symwedg} can now be converted into the following
form using \eqref{eqn:wedgdual}.

\begin{cor}
The graded multiplicity of $\St$ in
$\sym\otimes \wedge^m(V)^*\otimes \Det^k$ is given by
\begin{align*}
&H_{\St} \big(\sym\otimes \wedge^m(V)^*\otimes \Det^k;t\big)\\
=&\left\{
\begin{array}{ll}
\ds\frac{t^{-n+\frac{q^n-1}{q-1}}}{\prod^n_{i=1}(1-t^{q^i-1})}
\Big((1-t^{q^n-1})~e_{n-m}(t^{-1},t^{-q},\ldots,t^{-q^{n-2}})&\\
\qquad\qquad\qquad\qquad\qquad\quad+t^{q^n-1}~e_{n-m}(t^{-1},t^{-q},
\ldots,t^{-q^{n-1}})\Big),&  \text{ if } \/k=1\\
\ds\frac{t^{-n+(q+1-k)\frac{q^n-1}{q-1}}}{\prod^n_{i=1}(1-t^{q^i-1})}
~e_{n-m}(t^{-1},t^{-q},\ldots,t^{-q^{n-2}},t^{-q^{n-1}}),& \text{ if
} \/2 \le k \le q-1.
\end{array}
\right.
\end{align*}
\end{cor}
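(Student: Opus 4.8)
The plan is to deduce the corollary directly from Theorem~\ref{thm:symwedg} via the $\Gq$-module isomorphism \eqref{eqn:wedgdual}, namely $\wedge^m(V)^* \cong \wedge^{n-m}(V)\otimes\Det^{q-2}$, together with the fact that $\Det^{q-1}$ is the trivial $\Gq$-module. Tensoring \eqref{eqn:wedgdual} with $\sym\otimes\Det^k$ yields an isomorphism of graded $\Gq$-modules
\[
\sym\otimes\wedge^m(V)^*\otimes\Det^k \;\cong\; \sym\otimes\wedge^{n-m}(V)\otimes\Det^{k+q-2},
\]
so $H_{\St}(\sym\otimes\wedge^m(V)^*\otimes\Det^k;t)$ equals $H_{\St}(\sym\otimes\wedge^{n-m}(V)\otimes\Det^{k+q-2};t)$, and it remains to reduce the exponent $k+q-2$ modulo $q-1$ and invoke the appropriate part of Theorem~\ref{thm:symwedg}.

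First I would handle the case $k=1$: here $k+q-2=q-1$, so $\Det^{k+q-2}$ is trivial and the module is $\sym\otimes\wedge^{n-m}(V)$; Theorem~\ref{thm:symwedg}(2) with $m$ replaced by $n-m$ gives precisely the first displayed formula. For $2\le k\le q-1$ one has $q\le k+q-2\le 2q-3$, hence $\Det^{k+q-2}\cong\Det^{k-1}$ with $1\le k-1\le q-2$; Theorem~\ref{thm:symwedg}(1) with $m$ replaced by $n-m$ and $k$ replaced by $k-1$ then produces
\[
\frac{t^{-n+(q-(k-1))\frac{q^n-1}{q-1}}}{\prod^n_{i=1}(1-t^{q^i-1})}\, e_{n-m}(t^{-1},t^{-q},\ldots,t^{-q^{n-1}}),
\]
and since $q-(k-1)=q+1-k$ this is exactly the second displayed formula. (These two ranges of $k$, together with $\Det^0\cong\Det^{q-1}$, account for every $\Det$-twist.)

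I do not expect any genuine obstacle here; the only thing requiring attention is the elementary bookkeeping with the determinant exponents: verifying that $k+q-2$ lands in the claimed residue class modulo $q-1$, that the shift $k\mapsto k-1$ keeps us in the range $1\le k-1\le q-2$ where Theorem~\ref{thm:symwedg}(1) is valid, and that the resulting exponent matches the one in the statement. (When $q=2$ the statement is vacuous, as $\Det$ is then itself trivial.)
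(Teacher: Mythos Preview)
Your proposal is correct and follows exactly the approach indicated in the paper: the corollary is obtained from Theorem~\ref{thm:symwedg} by applying the isomorphism \eqref{eqn:wedgdual} and reducing the resulting determinant exponent modulo $q-1$. (Your parenthetical about $q=2$ is slightly off---the $k=1$ case is still a nonvacuous assertion there---but your argument covers it correctly regardless.)
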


\subsection{The multiplicity of $\St$ in $\sym\otimes \wedge^\nu(V)\otimes\Det^k$.}

For a partition $\nu=(\nu_1,\ldots,\nu_{\ell})$ with $\nu_1\leq n$,
denote by $\wedge^{\nu}(V)$ the $GL_n(q)$-module
$$
\wedge^{\nu}(V)=\wedge^{\nu_1}(V)\otimes\cdots\otimes\wedge^{\nu_{\ell}}(V).
$$
Recall $e_\nu$ denotes the elementary symmetric polynomial
associated to $\nu$. The following  is a generalization of
Theorem~\ref{thm:symwedg}(1) (which corresponds to the case $\ell
=1$ below).

\begin{thm}\label{symwedgnu}
Let  $\nu=(\nu_1,\ldots,\nu_{\ell})$ be a partition with length
$\ell(\nu)=\ell$ and $\nu_1\leq n$. Let $k$ be a positive integer
such that  $k+\ell\leq q-1$. Then the graded multiplicity of $\St$
in $\sym\otimes \wedge^{\nu}(V)\otimes \Det^k$ is given by
\begin{align*}
H_{\St} \big(\sym\otimes \wedge^{\nu}(V)\otimes \Det^k;t\big)
=\ds\frac{t^{-n+(q-k)\frac{q^n-1}{q-1}}}{\prod^n_{i=1}(1-t^{q^i-1})}
~e_{\nu}(t^{-1},t^{-q},\ldots,t^{-q^{n-1}}).
\end{align*}
\end{thm}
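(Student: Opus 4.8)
The plan is to follow the same two-step strategy used in the proof of Theorem~\ref{thm:symwedg}(1), now with the single exterior power $\wedge^m(V)$ replaced by the tensor product $\wedge^{\nu_1}(V)\otimes\cdots\otimes\wedge^{\nu_\ell}(V)$. First I would fix $a\in\Z_+$ and observe that $S^a(V)\cong\na(a\om_1)$, $\wedge^{\nu_j}(V)\cong\na(\om_{\nu_j})$, and $\Det^k\cong\na(\om_n)^{\otimes k}$, so by Lemma~\ref{lem:J.Wang} the module $S^a(V)\otimes\wedge^{\nu}(V)\otimes\Det^k$ has a good filtration and Proposition~\ref{Pillen} applies. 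This converts $\dim\Hom_{\Gq}(\St, S^a(V)\otimes\wedge^\nu(V)\otimes\Det^k)$ into the characteristic-zero multiplicity $\sum_{\la\in X^+}[L(a\om_1)_\C\otimes L(\om_{\nu_1})_\C\otimes\cdots\otimes L(\om_{\nu_\ell})_\C\otimes L(\om_n)_\C^{\otimes k}\otimes L(\la)_\C : L((q-1)\rho+q\la)_\C]$, exactly as in \eqref{eqn:wedg1}.

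Next I would apply Pieri's formula repeatedly. First tensoring by $L(a\om_1)_\C$ and the $k$ copies of $L(\om_n)_\C$ produces, as before, the constraints \eqref{cond:Pieri a}--\eqref{cond:Pieri c}, i.e. one adds $(a_1,\dots,a_n)+k\om_n$ to $\la$ subject to $a_i\in\Z_+$, $\sum a_i=a$, and the interlacing $\la_i+k+a_i\le\la_{i-1}+k$. Then tensoring successively by $L(\om_{\nu_1})_\C,\dots,L(\om_{\nu_\ell})_\C$ contributes, at step $j$, a strictly increasing tuple $1\le i^{(j)}_1<\cdots<i^{(j)}_{\nu_j}\le n$ (a column-strip) added as $\var_{i^{(j)}_1}+\cdots+\var_{i^{(j)}_{\nu_j}}$, with the proviso that after each addition the weight stays dominant. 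Setting the resulting weight equal to $(q-1)\rho+q\la$ pins down $(a_1,\dots,a_n)$ in terms of the chosen column-strips (and forces the intermediate dominance conditions automatically, as in the remark after \eqref{eq:weight}), so the count reduces to enumerating tuples of column-strips $S_1,\dots,S_\ell$ together with $\la\in X^+$ satisfying the analogues of \eqref{cond:wedg1}--\eqref{cond:wedg5}. The hypothesis $k+\ell\le q-1$ is exactly what guarantees, via the $i=n$ instances of \eqref{cond:wedg2}--\eqref{cond:wedg3} (which now read $(q-1)\rho_i+(q-1)\la_i-k-(\text{number of strips through }i)\ge0$), that $\la_n\ge1$ and hence $\la_i\ge1$ for all $i$ — so these positivity conditions become vacuous and only the interlacing conditions \eqref{cond:wedg4}--\eqref{cond:wedg5} remain to be repackaged.

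Then, as in the proof of Theorem~\ref{thm:symwedg}(1), I would change variables: set $\bar\la_n=\la_n-1$ and, for $2\le i\le n$, let $\bar\la_{i-1}=\la_{i-1}+k-\bigl((q-1)\rho_i+q\la_i\bigr)+c_i$, where $c_i$ is the number of strips $S_1,\dots,S_\ell$ containing the index $i$. A direct computation shows the interlacing conditions hold for $\la\in X^+$ precisely when $(\bar\la_1,\dots,\bar\la_n)\in\Z_+^n$, and that \eqref{cond:wedg1} becomes
\begin{equation*}
a=-n+(q-k)\sum_{i=1}^{n}q^{i-1}-\sum_{j=1}^{\ell}\Bigl(\,\sum_{i\in S_j}q^{i-1}\Bigr)+\sum_{i=1}^{n}(q^i-1)\bar\la_i.
\end{equation*}
Summing over all column-strips $S_j$ and all $(\bar\la_1,\dots,\bar\la_n)\in\Z_+^n$, the factor $\prod_{i=1}^n(1-t^{q^i-1})^{-1}$ comes from the free variables $\bar\la_i$, the prefactor $t^{-n+(q-k)(q^n-1)/(q-1)}$ is the constant term, and the sum over each strip $S_j$ of length $\nu_j$ contributes $e_{\nu_j}(t^{-1},t^{-q},\dots,t^{-q^{n-1}})$; multiplying these gives $e_\nu(t^{-1},t^{-q},\dots,t^{-q^{n-1}})$ since $e_\nu=\prod_j e_{\nu_j}$ by definition. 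Extracting $[t^a]$ and using \eqref{eqn:wedg1} yields the claimed formula. The main obstacle is the bookkeeping of the dominance conditions after each successive Pieri step: one must check that imposing the final equality $\la+k\om_n+(a_1,\dots,a_n)+\sum_j\sum_{i\in S_j}\var_i=(q-1)\rho+q\la$ renders all intermediate dominance requirements redundant, and that the combined positivity conditions genuinely collapse under $k+\ell\le q-1$ — this is where the hypothesis on $k$ is used and where care is needed, but it is a routine (if slightly tedious) extension of the $\ell=1$ argument already carried out.
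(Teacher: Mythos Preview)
Your proposal is correct and follows essentially the same route as the paper's own proof: reduce via Proposition~\ref{Pillen} to a characteristic-zero multiplicity, apply Pieri's formula iteratively for the $\ell$ column-strips, use the hypothesis $k+\ell\le q-1$ both to make the intermediate dominance conditions (your ``main obstacle'') automatic and to collapse the positivity constraints to $\la_n\ge1$, then perform the same change of variables $\bar\la_i$ (with the multiplicity $c_i$ of index $i$ among the strips) to arrive at the product $e_\nu(t^{-1},\dots,t^{-q^{n-1}})$. The paper's argument for the redundancy of the intermediate dominance conditions is exactly the Young-diagram observation you allude to: consecutive rows of $(q-1)\rho+q\la$ differ by at least $q-1\ge\ell$, so removing at most $\ell$ boxes per row preserves dominance.
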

\begin{proof}
Fix $a\in \Z_+$. By arguments similar to Theorem~\ref{thm:symwedg},
one can show using Proposition~\ref{Pillen} and Pieri's formula
that, for $a\geq 0$,
\begin{align}
&{\rm dim~Hom}_{GL_n(q)}(St,~ S^a(V)\otimes\wedge^{\nu}(V)\otimes\Det^k)\notag\\
&=\sum_{\la\in X^+} \big[ L(a\om_1)_{\C}\otimes
L(\om_{\nu_1})_{\C}\otimes\cdots\otimes L(\om_{\nu_{\ell}})_{\C}\otimes
L(\om_n)_{\C}^{\otimes k}\otimes L(\la)_{\C}:L((q-1)\rho+q\la)_{\C} \big]\notag\\
&=\sum_{\la\in X^+}|\Ga^{\nu}_{\la}|, \label{eqn:wedgnu1}
\end{align}
where $\Ga^{\nu}_{\la}$ is the set consisting of the sequences
$\left((a_1,\ldots,a_n), (i^b_u | 1\leq u\leq\nu_b, 1\leq
b\leq\ell)\right)$ satisfying ~(\ref{cond:Pieri a})-(\ref{cond:Pieri
c}) and the following additional conditions
\eqref{cond:nuPieri1}-\eqref{cond:nuPieri3}:
\begin{align}
&1\leq i^j_1<\cdots<i^j_{\nu_j} \leq n, \quad 1\leq j\leq \ell,\label{cond:nuPieri1}\\
& \la+(a_1,\ldots,a_n)+k\om_n + \sum_{b=1}^j \sum_{u=1}^{\nu_b}
\var_{i_u^b} \in X^+, \quad 1\leq j\leq \ell,\label{cond:nuPieri2}\\
& \la+(a_1,\ldots,a_n)+k\om_n
+\sum_{b=1}^\ell \sum_{u=1}^{\nu_b} \var_{i_u^b} =(q-1)\rho+q\la.\label{cond:nuPieri3}
\end{align}
Note that (\ref{cond:nuPieri2}) will automatically hold if
$\left((a_1,\ldots,a_n), (i^b_u | 1\leq u\leq\nu_b, 1\leq
b\leq\ell)\right)$ satisfies (\ref{cond:nuPieri1})~
and~(\ref{cond:nuPieri3}). This is clear once we visualize the
weight $(q-1)\rho+q\la$ as a Young diagram whose consecutive rows
differ by at least $(q-1)$, and removing at most $\ell$ boxes in
each row gives rise to  new Young diagrams  corresponding to the
weights  in (\ref{cond:nuPieri2}) (recall here $\ell \le q-1$ by
assumption).

For $1\leq i\leq n$, denote
\begin{equation*}
c_i=\#\{(b,u)~|~i^b_u=i, 1\leq u\leq\nu_b, 1\leq b\leq\ell \}.
\end{equation*}
Hence, regarding \eqref{cond:nuPieri3} as a defining relation for
$(a_1,\ldots,a_n)$, we see that the set $\Ga^{\nu}_{\la}$ has the
same cardinality as the set consisting of the sequences $(i^b_u |
1\leq u\leq\nu_b, 1\leq b\leq\ell)$ satisfying~(\ref{cond:nuPieri1})
and \eqref{cond:wedgnu1}-\eqref{cond:wedgnu3} below:
\begin{align}
& (q-1)\rho_i+(q-1)\la_i-k-c_i \geq 0,  \quad1\leq i\leq n,   \label{cond:wedgnu1}\\
& (q-1)\rho_i+q\la_i-c_i \leq\la_{i-1}+k,  \quad  2\leq i\leq n,  \label{cond:wedgnu2} \\
& (q-1)|\rho|+(q-1)|\la|-nk-|\nu| =a.  \label{cond:wedgnu3}
\end{align}
Therefore, the identity (\ref{eqn:wedgnu1}) can be rewritten as
\begin{align}
&{\rm dim~Hom}_{GL_n(q)}(\St,~ S^a(V)\otimes\wedge^{\nu}(V)\otimes\Det^k)\notag\\
=&\sum_{\la\in X^+}
\#
\Big\{\text{the tuples } \big(i^b_u | 1\leq u\leq\nu_b, 1\leq b\leq\ell\big)
 \text{ that satisfy }(\ref{cond:nuPieri1}),
  (\ref{cond:wedgnu1})\text{--}(\ref{cond:wedgnu3})\Big\}\notag\\
=&\sum_{1\leq i^j_1<\cdots<i^j_{\nu_j}\leq n, ~1\leq j\leq \ell}
\#\left\{\la\in X^+\text{ that satisfy
}(\ref{cond:wedgnu1})\text{--}(\ref{cond:wedgnu3}) \right\}.
\label{eqn:wedgnu2}
\end{align}
Given $1\leq i^j_1<\cdots<i^j_{\nu_j}\leq n$ for $1\leq j\leq \ell$,
we set $\bar{\la}=(\bar{\la}_1,\ldots,\bar{\la}_n)$ for $\la\in X^+$ with
\begin{align*}
\bar{\la}_n&=\la_n-1,\\
\bar{\la}_{i-1}&=\la_{i-1}+k-
((q-1)\rho_i+q\la_i-c_i), \quad 2\leq i\leq n.
\end{align*}

We claim that~(\ref{cond:wedgnu1})~and~(\ref{cond:wedgnu2}) hold for
$\la \in X^+$ if and only if
$\bar{\la}=(\bar{\la}_1,\ldots,\bar{\la}_n)\in\Z_+^n$. In fact,
(\ref{cond:wedgnu2})~ is clearly equivalent to that
$\bar{\la}_i\geq0$ for $1\leq i\leq n-1$. Also,
\eqref{cond:wedgnu1} for $i=n$ (which reads that $(q-1)\la_n-k-
c_n\geq0$) holds if and only if $\bar{\la}_n=\la_n-1\geq 0$, since
$1\leq k+c_n\leq k+\ell\leq q-1$ by assumption. It is further
readily checked that $\la \in X^+$ follows from $\bar{\la} \in
\Z_+^n$. So it remains to see that if
$(\bar{\la}_1,\ldots,\bar{\la}_n)\in\Z_+^n$ then
$
(q-1)\rho_i+(q-1)\la_i-k-c_i\geq0
$
for $1\leq i\leq n-1$;
this follows from the facts that
$
k+c_i\leq k+\ell\leq q-1
$
and $\la_i\geq\la_n\geq 1$.

On the other hand, a direct calculation shows that
\begin{align*}
\sum^n_{i=1}(q^i-1)\bar{\la}_i
=&\sum^n_{i=1}(q-1)\la_i-(q^n-1)
+\sum^{n}_{i=1}k(q^{i-1}-1)\\
&-\sum^{n}_{i=1}(q^{i-1}-1)(q-1)\rho_{i}
+\sum^{n}_{i=1}(q^{i-1}-1)c_i.
\end{align*}
It follows that
{\allowdisplaybreaks
\begin{align*}
(q-1)|\rho|& +(q-1)|\la|-nk-|\nu|\\
=& \sum^n_{i=1}(q^i-1)\bar{\la}_i+(q^n-1)+\sum^{n}_{i=1}q^{i-1} (q-1)\rho_{i}\\
&\quad -\sum^{n}_{i=1}k(q^{i-1}-1)-\sum^{n}_{i=1}(q^{i-1}-1)c_i
-nk-|\nu|\\
=&\sum^n_{i=1}(q^i-1)\bar{\la}_i+(q^n-1)+(-n+1+q+\cdots+q^{n-1})\\
&\quad -\sum^{n}_{i=1}kq^{i-1}
-\sum^{n}_{i=1}q^{i-1}c_i+\sum^{n}_{i=1}c_i -|\nu|\\
=&\sum^n_{i=1}(q^i-1)\bar{\la}_i+(-n) +(q-k)\frac{q^n-1}{q-1}-\sum^n_{i=1}q^{i-1}c_i,
\end{align*}
  }
  since $\sum^{n}_{i=1}c_i=\nu_1+\ldots+\nu_{\ell}=|\nu|$.
This together with the identity
\begin{align*}
\sum^n_{i=1}q^{i-1}c_i=\sum_{1\leq b\leq \ell, 1\leq u\leq \nu_b}q^{i^b_u-1}
\end{align*}
implies that~(\ref{cond:wedgnu3}) is equivalent to
\begin{align}
a=\sum^n_{i=1}(q^i-1)\bar{\la}_i+(-n)+(q-k)\frac{q^n-1}{q-1}
-\sum_{1\leq b\leq \ell, 1\leq u\leq \nu_b}q^{i^b_u-1}.
\label{eqn:wedgnu3}
\end{align}

Summarizing, we can rewrite (\ref{eqn:wedgnu2}) as
\begin{align*}
&{\rm dim~Hom}_{GL_n(q)}(\St,~ S^a(V)\otimes\wedge^{\nu}(V)\otimes\Det^k)\\
&=\sum_{1\leq i^j_1<\cdots<i^j_{\nu_j}\leq n, ~1\leq j\leq \ell}
\#\big\{(\bar{\la}_1,\ldots,\bar{\la}_n) \in \Z_+^n
\text{ that satisfy }(\ref{eqn:wedgnu3})\big\}\\
&=[t^a]
\frac{t^{-n+(q-k)\frac{q^n-1}{q-1}}}{\prod^n_{i=1}(1-t^{q^i-1})}
\prod^{\ell}_{j=1}e_{\nu_j}(t^{-1},t^{-q},\ldots,t^{-q^{n-1}})\\
&=[t^a]
\frac{t^{-n+(q-k)\frac{q^n-1}{q-1}}}{\prod^n_{i=1}(1-t^{q^i-1})}
e_{\nu}(t^{-1},t^{-q},\ldots,t^{-q^{n-1}}).
\end{align*}
Therefore,
\begin{align*}
H_{\St} &\big(\sym\otimes \wedge^{\nu}(V)\otimes \Det^k;t \big)  \\
=&\sum_{a\geq 0}\big({\rm dim~Hom}_{GL_n(q)}
(\St, S^a(V)\otimes\wedge^{\nu}(V)\otimes\Det^k)\big)\cdot t^a\\
=&\ds\frac{t^{-n+(q-k)\frac{q^n-1}{q-1}}}{\prod^n_{i=1}(1-t^{q^i-1})}
e_{\nu}(t^{-1},t^{-q},\ldots,t^{-q^{n-1}}).
\end{align*}
The theorem is proved.
\end{proof}
\begin{rem}
Using a similar argument, we can in principle obtain a (very messy in general) formula  for
the graded multiplicity of $\St$ in
$\sym\otimes \wedge^{\nu}(V)\otimes \Det^k$
without the assumption that $k+\ell\leq q-1$, generalizing Theorem~\ref{thm:symwedg}(2).
For example, the formula in the case $\nu=(1,1,\ldots,1)$ is given by
\begin{align*}
H_{\St} & \big(S^\bullet(V)\otimes V^{\otimes \ell}\otimes \Det^k;t \big) \\
=&\ds\frac{t^{-n+(q-k)\frac{q^n-1}{q-1}}}{\prod^n_{i=1}(1-t^{q^i-1})}
\sum^{q-1-k}_{c=0}\binom{\ell}{c}(t^{-1}+t^{-q}+\cdots+t^{-q^{n-2}})^{\ell-c}(t^{-q^{n-1}})^c\\
+&\ds\frac{t^{-n+(2q-1-k)\frac{q^n-1}{q-1}}}{\prod^n_{i=1}(1-t^{q^i-1})}
\sum^{\ell}_{c=q-k}\binom{\ell}{c}(t^{-1}+t^{-q}+\cdots+t^{-q^{n-2}})^{\ell-c}(t^{-q^{n-1}})^c
\end{align*}
for $1\leq k\leq q-1$.
\end{rem}
By~(\ref{eqn:wedgdual}), one has
\begin{equation}\label{eqn:wedgdualnu}
\wedge^{\nu}(V)^*
\cong \wedge^{n-\nu_{\ell}}(V)
\otimes\cdots\otimes\wedge^{n-\nu_1}(V)
\otimes\Det^{q-1-\ell}
\end{equation}
for any partition $\nu$ with $\nu_1 \le n$. Hence,
Theorem~\ref{symwedgnu} can be converted into the following form and
vice versa.
\begin{cor}\label{cor:symwedgnu*}
Let $\nu=(\nu_1,\ldots,\nu_{\ell})$ be a (nonempty) partition
with length $\ell(\nu)=\ell$ such that $\nu_1\leq n$.
For $\ell< k\leq q-1$, the graded multiplicity of $\St$ in
$\sym\otimes \wedge^{\nu}(V)^*\otimes \Det^k$
is given by
\begin{align*}
H_{\St} \big(\sym\otimes \wedge^{\nu}(V)^*\otimes \Det^k;t \big)
=\ds\frac{t^{-n+(q-k)\frac{q^n-1}{q-1}}}{\prod^n_{i=1}(1-t^{q^i-1})}
e_{\nu}(t,t^{q},\ldots,t^{q^{n-1}}).
\end{align*}
\end{cor}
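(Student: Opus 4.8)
The plan is to derive Corollary~\ref{cor:symwedgnu*} directly from Theorem~\ref{symwedgnu} by applying the duality isomorphism \eqref{eqn:wedgdualnu}. First I would record that for a graded $\Gq$-module $N^\bullet$ on which $\Det$ acts in a single bidegree, $H_{\St}(N^\bullet \otimes \Det^j;t)$ is obtained from $H_{\St}(N^\bullet;t)$ just by a shift; more relevantly, since $\St$ is self-dual and $\Hom_{\Gq}(\St, \sym\otimes W) \cong \Hom_{\Gq}(\St\otimes W^*,\sym)\cong\Hom_{\Gq}(\St,\sym\otimes W)$ type manipulations hold, the point is to rewrite $\wedge^\nu(V)^*\otimes\Det^k$ as a twisted tensor product of exterior powers covered by Theorem~\ref{symwedgnu}. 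By \eqref{eqn:wedgdualnu},
\[
\wedge^{\nu}(V)^*\otimes\Det^k \;\cong\; \wedge^{n-\nu_\ell}(V)\otimes\cdots\otimes\wedge^{n-\nu_1}(V)\otimes\Det^{q-1-\ell+k}.
\]
Set $\mu=(n-\nu_\ell,\ldots,n-\nu_1)$, a partition since $\nu_1\le n$ forces each part $n-\nu_j\ge 0$ and the parts are weakly decreasing; note $\ell(\mu)\le \ell$ (parts equal to $n$ may occur only if some $\nu_j=0$, which we exclude as $\nu$ is a partition, so in fact $\mu_1=n-\nu_\ell\le n-1<n$ when $\nu_\ell\ge 1$, hence $\wedge^\mu(V)$ is well-defined with $\mu_1\le n$). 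However, $\Det^{n}\cong\wedge^n(V)$ would contribute extra parts; since each $n-\nu_j\le n-1$ there is no such contribution and $\ell(\mu)=\#\{j:\nu_j<n\}\le \ell$.

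Next I would reduce the exponent $q-1-\ell+k$ modulo $q-1$ using $\Det^{q-1}\cong\F$. The hypothesis $\ell<k\le q-1$ gives $1\le k-\ell\le q-1-\ell<q-1$, hence $q-1-\ell+k = (q-1)+(k-\ell)$ and $\Det^{q-1-\ell+k}\cong\Det^{k-\ell}$. Writing $k' := k-\ell$, we have $1\le k'$ and $k'+\ell(\mu)\le k'+\ell = k\le q-1$, so the hypothesis $k'+\ell(\mu)\le q-1$ of Theorem~\ref{symwedgnu} is satisfied with $\nu$ replaced by $\mu$ and $k$ replaced by $k'$. Applying Theorem~\ref{symwedgnu} therefore yields
\[
H_{\St}\big(\sym\otimes\wedge^\mu(V)\otimes\Det^{k'};t\big)
=\frac{t^{-n+(q-k')\frac{q^n-1}{q-1}}}{\prod_{i=1}^n(1-t^{q^i-1})}\, e_\mu(t^{-1},t^{-q},\ldots,t^{-q^{n-1}}).
\]

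Finally I would translate this back. Since $\sym\otimes\wedge^\nu(V)^*\otimes\Det^k\cong\sym\otimes\wedge^\mu(V)\otimes\Det^{k'}$ as graded $\Gq$-modules, the two graded multiplicities of $\St$ coincide. It remains to identify $e_\mu$ with $e_\nu$ after the substitution $t\mapsto t^{-1}$ in the variables. Concretely, $e_{n-\nu_j}(x_1,\ldots,x_n) = e_{\nu_j}(x_1,\ldots,x_n)\cdot \big(e_n(x_1,\ldots,x_n)\big)^{?}$ is not quite right; rather one uses that specializing $x_i = t^{-q^{i-1}}$ makes $e_n(x_1,\ldots,x_n) = t^{-(q^n-1)/(q-1)}$ and the classical identity $e_{n-j}(x_1,\ldots,x_n) = e_n(x_1,\ldots,x_n)\,e_j(x_1^{-1},\ldots,x_n^{-1})$. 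Thus $e_{n-\nu_j}(t^{-1},\ldots,t^{-q^{n-1}}) = t^{-\frac{q^n-1}{q-1}}\, e_{\nu_j}(t,t^q,\ldots,t^{q^{n-1}})$, and multiplying over $j=1,\ldots,\ell$ gives $e_\mu(t^{-1},\ldots,t^{-q^{n-1}}) = t^{-\ell\frac{q^n-1}{q-1}}\, e_\nu(t,t^q,\ldots,t^{q^{n-1}})$. Substituting this and $q-k' = q-k+\ell$ into the displayed formula, the prefactor becomes $t^{-n+(q-k+\ell)\frac{q^n-1}{q-1}}\cdot t^{-\ell\frac{q^n-1}{q-1}} = t^{-n+(q-k)\frac{q^n-1}{q-1}}$, which is exactly the claimed formula. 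The only real care needed—and the main (if minor) obstacle—is bookkeeping: checking that $\mu$ is a genuine partition with $\mu_1\le n$, that no spurious $\Det^n$ factor appears, that the congruence $\Det^{q-1-\ell+k}\cong\Det^{k-\ell}$ lands the new determinant power in the admissible range $[1,q-1-\ell(\mu)]$, and that the $e_{n-j}\leftrightarrow e_j$ reflection identity is applied with the correct monomial correction factor so the powers of $t$ match up.
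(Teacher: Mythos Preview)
Your proposal is correct and follows precisely the route the paper indicates: use the duality isomorphism \eqref{eqn:wedgdualnu} to rewrite $\wedge^\nu(V)^*\otimes\Det^k$ as $\wedge^\mu(V)\otimes\Det^{k-\ell}$ with $\mu=(n-\nu_\ell,\ldots,n-\nu_1)$, verify that the hypotheses of Theorem~\ref{symwedgnu} are met for $(\mu,k-\ell)$, and then convert $e_\mu(t^{-1},\ldots,t^{-q^{n-1}})$ into $t^{-\ell\frac{q^n-1}{q-1}}e_\nu(t,\ldots,t^{q^{n-1}})$ via the reflection identity $e_{n-j}(x_1,\ldots,x_n)=e_n(x_1,\ldots,x_n)\,e_j(x_1^{-1},\ldots,x_n^{-1})$. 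The paper states only that Theorem~\ref{symwedgnu} ``can be converted'' using \eqref{eqn:wedgdualnu}; you have simply supplied the bookkeeping the paper omits.
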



\subsection{The multiplicity of $\St$ in $\sym\otimes L(\mu)\otimes\Det^k$}

Let $S_d$ be the symmetric group of $d$ letters. According to the
Schur duality, $V^{\otimes d}$ as a $\GF \otimes S_d$-module (and
hence also as a $\Gq \otimes S_d$-module) is semisimple and
multiplicity-free, for $1\leq d\leq p-1$. We now compute the
multiplicity of $\St$ in $\sym\otimes L(\mu)\otimes\Det^k$. Our more
restrictive condition on $\mu$ below ensures that $L(\mu)$ appears
as a direct summand in $V^{\otimes d}$ for $1\leq d\leq p-1$.

\begin{thm}\label{thm:symirr}
Fix $1\leq k\leq q-1$ and $1\leq d\leq p-1$.
 Let $\mu=(\mu_1,\ldots,\mu_n)$ be a partition of
$d$ with $\ell(\mu)\leq n$.

(1)  If $k+\mu_1\leq q-1$,
then the graded multiplicity of $\St$ in the graded $GL_n(q)$-module
$\sym\otimes L(\mu)\otimes \Det^k$
is given by
\begin{align*}
H_{\St} \big(\sym\otimes L(\mu)\otimes \Det^k;t \big)
=\ds\frac{t^{-n+(q-k)\frac{q^n-1}{q-1}}}{\prod^n_{i=1}(1-t^{q^i-1})}
s_{\mu}(t^{-1},t^{-q},\ldots,t^{-q^{n-1}}).
\end{align*}

(2) If $\mu_1<k\leq q-1$, then the graded multiplicity of $\St$ in
the graded $GL_n(q)$-module $\sym\otimes L(\mu)^*\otimes \Det^k$ is
given by
\begin{align*}
H_{\St} \big(\sym\otimes L(\mu)^*\otimes \Det^k;t \big)
=\ds\frac{t^{-n+(q-k)\frac{q^n-1}{q-1}}}{\prod^n_{i=1}(1-t^{q^i-1})}
s_{\mu}(t,t^{q},\ldots,t^{q^{n-1}}).
\end{align*}
\end{thm}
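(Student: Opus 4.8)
The plan is to deduce Theorem~\ref{thm:symirr} from Theorem~\ref{symwedgnu} and Corollary~\ref{cor:symwedgnu*} by expanding the Schur polynomial $s_\mu$ in the basis of elementary symmetric polynomials $e_\nu$. The key point is the hypothesis $1\le d\le p-1$, which via Schur--Weyl duality over $\F$ guarantees that $V^{\otimes d}$ is semisimple as a $\GF$-module (indeed $\F S_d$ is semisimple), so that $L(\mu)$ for $\mu$ a partition of $d$ appears with multiplicity equal to the dimension of the Specht module $S^\mu$, exactly as in characteristic zero. More to the point, I would work on the level of the $\na$-filtration / character ring: since each $\wedge^{\nu_j}(V)\cong\na(\om_{\nu_j})$ has a good filtration and tensor products of modules with good filtrations again have good filtrations (Lemma~\ref{lem:J.Wang}), the module $\wedge^\nu(V)$ has a good filtration whose $\na$-factors are dictated by the Pieri rule, and the dual Jacobi--Trudi identity $s_\mu=\det(e_{\mu'_i-i+j})$ expresses $[\na(\mu)]$ as an integer combination of the classes $[\wedge^\nu(V)]=\prod_j[\na(\om_{\nu_j})]$ over $\nu$ with $\nu_1\le n$, at least after passing to the Grothendieck group (equivalently, formal characters). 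The same Jacobi--Trudi combination holds at the level of formal characters of $\mathfrak{gl}_n(\C)$, so the combinatorial identity I need is just the classical one.

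The first concrete step is: fix $a\in\Z_+$ and, exactly as in the proof of Theorem~\ref{symwedgnu}, use Proposition~\ref{Pillen} to rewrite $\dim\operatorname{Hom}_{\Gq}(\St,S^a(V)\otimes L(\mu)\otimes\Det^k)$ as the characteristic-zero multiplicity
\begin{align*}
\sum_{\la\in X^+}\big[L(a\om_1)_\C\otimes L(\mu)_\C\otimes L(\om_n)_\C^{\otimes k}\otimes L(\la)_\C:L((q-1)\rho+q\la)_\C\big].
\end{align*}
Here I must be careful that the reduction via Proposition~\ref{Pillen} requires $S^a(V)\otimes L(\mu)\otimes\Det^k$ to have a good filtration as a $\GF$-module; this is where $d\le p-1$ enters, since then $L(\mu)=\na(\mu)=\De(\mu)$ is a tilting module (a direct summand of $V^{\otimes d}$, which is tilting), so the tensor product is tilting and in particular has a good filtration. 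The second step is to observe that on the right-hand side, replacing $L(\mu)_\C$ by its Jacobi--Trudi expansion $\sum_\nu c_\nu\,[\wedge^\nu]$ (a finite $\Z$-linear combination, $\nu_1\le n$, $|\nu|=d$, $\ell(\nu)\le k$-compatible) is legitimate because the multiplicity bracket is additive in each tensor factor. Then each resulting summand is exactly the quantity computed in the proof of Theorem~\ref{symwedgnu} — note the hypothesis $k+\mu_1\le q-1$ forces $k+\ell(\nu)\le q-1$ for every $\nu$ occurring, since $\ell(\nu)=\mu'_1=\text{(number of parts of }\mu')\le\mu_1$ — wait, rather $\ell(\nu)\le\mu_1$ for the $\nu$'s in the $e$-expansion of $s_\mu$, so $k+\ell(\nu)\le k+\mu_1\le q-1$ is exactly the range where Theorem~\ref{symwedgnu} applies.

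Assembling, one gets
\begin{align*}
H_{\St}\big(\sym\otimes L(\mu)\otimes\Det^k;t\big)=\frac{t^{-n+(q-k)\frac{q^n-1}{q-1}}}{\prod_{i=1}^n(1-t^{q^i-1})}\sum_\nu c_\nu\,e_\nu(t^{-1},t^{-q},\ldots,t^{-q^{n-1}}),
\end{align*}
and the inner sum is precisely $s_\mu(t^{-1},t^{-q},\ldots,t^{-q^{n-1}})$ by Jacobi--Trudi, proving (1). For (2) I would either run the identical argument with Corollary~\ref{cor:symwedgnu*} in place of Theorem~\ref{symwedgnu} (the condition $\mu_1<k$ becomes $\ell(\nu)<k$ for the relevant $\nu$), or more cheaply use $L(\mu)^*\cong L(\mu^*)$ together with the substitution $t\mapsto t^{-1}$ and the symmetry $s_\mu(x_1^{-1},\ldots,x_n^{-1})=s_\mu(x_1,\ldots,x_n)\cdot(x_1\cdots x_n)^{-\mu_1}$ suitably adjusted, though keeping careful track of $\Det$-twists makes the direct route via Corollary~\ref{cor:symwedgnu*} cleaner. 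The main obstacle I anticipate is not the combinatorics — Jacobi--Trudi is standard — but rather verifying cleanly that the good-filtration hypothesis needed to invoke Proposition~\ref{Pillen} genuinely holds for $\sym\otimes L(\mu)\otimes\Det^k$ under the assumption $d\le p-1$, i.e.\ making the ``$L(\mu)$ is tilting, hence has a good filtration'' argument precise, and ensuring that every intermediate $\nu$-term in the $e$-expansion stays within the hypotheses ($\nu_1\le n$ and $k+\ell(\nu)\le q-1$) so that Theorem~\ref{symwedgnu} is applicable term by term.
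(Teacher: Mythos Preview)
Your approach is correct and closely parallel to the paper's, with one technical difference worth noting. The paper does not invoke Jacobi--Trudi; instead it uses the Kostka decomposition
\[
\wedge^{\mu'}(V)\cong L(\mu)\oplus\bigoplus_{\gamma<\mu,\ \ell(\gamma)\le n}K_{\gamma'\mu'}\,L(\gamma)
\]
as a genuine $\Gq$-module isomorphism (available since $V^{\otimes d}$ is semisimple for $d\le p-1$), and then argues by induction on the dominance order of $\mu$, peeling off $L(\mu)$ via the triangularity $K_{\mu'\mu'}=1$ together with the identity $e_{\mu'}=\sum_{\gamma\le\mu}K_{\gamma'\mu'}s_\gamma$. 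Your route through the signed Jacobi--Trudi expansion $s_\mu=\det(e_{\mu'_i-i+j})$ avoids the induction at the cost of working with virtual combinations in the Grothendieck group; this is harmless precisely because $\St$ is projective, so $\dim\mathrm{Hom}_{\Gq}(\St,-)$ factors through $K_0$. For the same reason your detour through Proposition~\ref{Pillen} in the ``first concrete step'' is unnecessary: once you have $[L(\mu)]=\sum_\nu c_\nu[\wedge^\nu(V)]$ in $K_0$ you can apply the \emph{statement} of Theorem~\ref{symwedgnu} directly to each summand rather than re-entering its proof. Both arguments need, and both correctly verify, that every $\nu$ appearing satisfies $\ell(\nu)\le\mu_1$, so that the hypothesis $k+\ell(\nu)\le q-1$ of Theorem~\ref{symwedgnu} is met; Part~(2) is handled in the paper exactly as you suggest, via Corollary~\ref{cor:symwedgnu*} and the dual decomposition.
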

\begin{proof}
(1) Suppose  $k+\mu_1\leq q-1$ and let  $\mu'$  be the conjugate of
$\mu$. Then $\ell(\mu')=\mu_1$ and hence $k+\ell(\mu')\leq q-1$. By
Theorem~\ref{symwedgnu}, we have
\begin{align*}
H_{\St} \big(\sym\otimes\wedge^{\mu'}(V)\otimes\Det^k;t \big)
=\ds\frac{t^{-n+(q-k)\frac{q^n-1}{q-1}}}{\prod^n_{i=1}(1-t^{q^i-1})}
e_{\mu'}(t^{-1},t^{-q},\ldots,t^{-q^{n-1}}).
\end{align*}
Since $d \le p-1$, the $\Gq$-module $V^{\otimes d}$ is semisimple
and so is its submodule  $\wedge^{\mu'}(V)$. Hence, we have the
following decomposition of $\Gq$-modules:
\begin{align}
\wedge^{\mu'}(V)
\cong\sum_{\ell(\ga)\leq n,\ga\leq\mu}K_{\ga'\mu'}L(\ga)
\cong L(\mu)\oplus\sum_{\ell(\ga)\leq n,\ga<\mu}K_{\ga'\mu'}L(\ga),
\label{eqn:SWduality}
\end{align}
where $K_{\ga'\mu'}$ are the Kostka numbers (cf. e.g. \cite{FH}).

We complete the proof of Part~(1) by induction on the dominance
order of $\mu$. For $\mu$ minimal in the sense that there is no
partition $\ga <\mu$ with $\ell(\ga) \le n$, Part~(1) reduces to
Theorem~\ref{symwedgnu}.

Now let $\mu=(\mu_1,\ldots,\mu_n)$ be a general partition of $d$
with $\ell(\mu)\leq n$. For $\ga <\mu$  with $\ell(\ga) \le n$, we
have $\ga_1 \leq \mu_1$ and  $k+\ga_1\leq q-1$. Thus by induction
hyperthesis we have
\begin{align*}
H_{\St}(\sym\otimes L(\ga)\otimes \Det^k;t)
=\ds\frac{t^{-n+(q-k)\frac{q^n-1}{q-1}}}{\prod^n_{i=1}(1-t^{q^i-1})}
s_{\ga}(t^{-1},t^{-q},\ldots,t^{-q^{n-1}}).
\end{align*}
This together with~(\ref{eqn:SWduality}) gives us
\begin{align*}
H&_{\St} \big(\sym\otimes L(\mu)\otimes\Det^k;t \big)\\
=&H_{\St} \big(\sym\otimes \wedge^{\mu'}(V)\otimes\Det^k;t\big)
-\sum_{\ell(\ga)\leq n,\ga<\mu}K_{\ga'\mu'}H_{\St}
\big(\sym\otimes L(\ga)\otimes\Det^k;t \big)\\
=&\ds\frac{t^{-n+(q-k)\frac{q^n-1}{q-1}}}{\prod^n_{i=1}(1-t^{q^i-1})}~
\Big(e_{\mu'}(t^{-1},t^{-q},\ldots,t^{-q^{n-1}})
-\sum_{\ell(\ga)\leq n,\ga<\mu}K_{\ga'\mu'}
s_{\ga}(t^{-1},t^{-q},\ldots,t^{-q^{n-1}})\Big)\\
=&\ds\frac{t^{-n+(q-k)\frac{q^n-1}{q-1}}}
{\prod^n_{i=1}(1-t^{q^i-1})}~s_{\mu}(t^{-1},t^{-q},\ldots,t^{-q^{n-1}}),
\end{align*}
where the last equality is due to the symmetric function identity
$$e_{\mu'} (x_1, x_2,\ldots, x_n)
=\sum_{\ga\vdash d, \ell(\ga)\leq n,\ga\leq\mu}K_{\ga'\mu'}s_{\ga} (x_1, x_2,\ldots, x_n).$$

(2) Suppose $\mu_1<k\leq q-1$ and let $\mu'$ be the conjugate of
$\mu$. Then $\ell(\mu')=\mu_1$ and hence $\ell(\mu')<k$. By
Corollary~\ref{cor:symwedgnu*} we have
\begin{align*}
H_{\St}(\sym\otimes\wedge^{\mu'}(V)^*\otimes\Det^k;t)
=\ds\frac{t^{-n+(q-k)\frac{q^n-1}{q-1}}}
{\prod^n_{i=1}(1-t^{q^i-1})}~e_{\mu'}(t,t^{q},\ldots,t^{q^{n-1}}).
\end{align*}
On the other hand, by ~(\ref{eqn:SWduality}) one has
\begin{align*}
\wedge^{\mu'}(V)^*
\cong\sum_{\ga\vdash d, \ell(\ga)\leq n,\ga\leq\mu}K_{\ga'\mu'}L(\ga)^*
\cong L(\mu)^*\oplus\sum_{\ga\vdash d, \ell(\ga)\leq n,\ga<\mu}K_{\ga'\mu'}L(\ga)^*,
\label{eqn:SWduality}
\end{align*}
The theorem follows by an argument similar to the proof of
Part~(1).
\end{proof}


\section{The composition multiplicity in $\sym$ around the Steinberg module}
\label{sec:compmult}

In this section, we shall determine the graded multiplicity in the
symmetric algebra $\sym$ for a large class of of irreducible modules
around the Steinberg module.

Recall the definition of the Hilbert series of a graded vector space from Section \ref{sec:notations}.
Denote by $P(\la)$ the projective cover of the irreducible
$\Gq$-module $L(\la)$ for $\la\in X_r$. The graded multiplicity of
$L(\la)$ in the symmetric algebra $\sym$ is equal to the Hilbert
series of the graded space ${\rm Hom}_{\Gq}(P(\la), \sym)$.


\subsection{A case for any prime $p$}

Recall that $q=p^r$.
 For $\la\in X_r$,  set
 \begin{equation}
 \la^0=(q-1)\rho+w_0\la.
 \label{eqn:wt0}
 \end{equation}
According to Tsushima \cite{Ts} (which goes back to Lusztig
\cite{Lu} for $m=1$ and $p>2$),  we have, for $1\leq m\leq n-1$,
that
\begin{align*}
\ds\St\otimes\wedge^m(V)
\cong\left\{
\begin{array}{ll}
P(\om_m^0),&\text{ if }q>2 \\
P(\om_m^0)\oplus\St,&\text{ if }q=2,
\end{array}
\right.
\end{align*}
and hence also, for all $1\leq k\leq q-1$,
\begin{align}\label{eqn:Ts}
\ds\St\otimes\wedge^m(V)\otimes\Det^k
\cong\left\{
\begin{array}{ll}
P(\om_m^0+k\om_n),&\text{ if }q>2  \\
P(\om_m^0+k\om_n)\oplus\St\otimes\Det^k,&\text{ if }q=2.
\end{array}
\right.
\end{align}

Recall the Kronecker symbol $\delta_{2,q}=1$ if $q=2$ and
$\delta_{2,q}=0$ if $q>2$. For $1\leq m\leq n-1$, define
$\ga=(\ga_1,\ldots,\ga_n)\in X_r$ by $\ga_i=(q-1)(n-i)-k-1$ for
$1\leq i\leq m$ and $\ga_i=(q-1)(n-i)-k$ for $m+1\leq i\leq n$.

\begin{thm}\label{thm:grcompmult1}
Suppose $1\leq m\leq n-1$ and $1\leq k\leq q-1$.

(1) For $1\leq k\leq q-2$,  the graded composition multiplicity of
$L(\ga)$ in $S^\bullet(V)$ is given by
\begin{align*}
\ds\frac{t^{-n+(q-k)\frac{q^n-1}{q-1}}}{\prod^n_{i=1}(1-t^{q^i-1})}~
e_m(t^{-1},t^{-q},\ldots,t^{-q^{n-2}},t^{-q^{n-1}}).
\end{align*}
(2) The graded composition multiplicity of $L(\ga)$  in
$\sym$ is given by
\begin{align*}
\ds&\frac{t^{-n+\frac{q^n-1}{q-1}}}{\prod^n_{i=1}(1-t^{q^i-1})}
\Big((1-t^{q^n-1})~e_m(t^{-1},t^{-q},\ldots,t^{-q^{n-2}})   \\
& \qquad \qquad\qquad \qquad +t^{q^n-1}~e_m(t^{-1},t^{-q},
 \ldots,t^{-q^{n-2}},t^{-q^{n-1}})\Big)
  - \delta_{2,q}\frac{t^{-n+\frac{q^n-1}{q-1}}}{\prod^n_{i=1}(1-t^{q^i-1})}.
\end{align*}
\end{thm}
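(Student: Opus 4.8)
The plan is to reduce everything to the Steinberg multiplicities already computed in Theorem~\ref{thm:symwedg}, using Tsushima's description \eqref{eqn:Ts} of $\St\otimes\wedge^m(V)\otimes\Det^k$ as a PIM (plus a Steinberg summand when $q=2$) together with elementary duality for projective covers. By Mitchell's observation recalled at the start of this section, the graded composition multiplicity of $L(\ga)$ in $\sym$ equals the Hilbert series of $\mathrm{Hom}_{\Gq}(P(\ga),\sym)$, so it suffices to realize $P(\ga)$ concretely. Since $L(\ga)^*\cong L(-w_0\ga)$ we have $P(\ga)^*\cong P(-w_0\ga)$; and a short computation with $\rho$ and $w_0$---carried out modulo the twist $\Det^{q-1}\cong\F$, which is harmless because $P(\la)\otimes\Det^{q-1}\cong P(\la)$---gives the weight congruence $-w_0\ga\equiv\om_m^0+k\om_n\pmod{(q-1)\om_n}$. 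Combining this with \eqref{eqn:Ts}, the self-duality $\St^*\cong\St$, and $\wedge^j(V)^*\cong\wedge^{n-j}(V)\otimes\Det^{-1}$ from \eqref{eqn:wedgdual}, one obtains $P(\ga)\cong\St\otimes\wedge^{n-m}(V)\otimes\Det^{q-2-k}$ when $q>2$, and $P(\ga)\oplus\St\cong\St\otimes\wedge^{n-m}(V)\otimes\Det^{q-2-k}$ when $q=2$.

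Next I would turn this $\mathrm{Hom}$-space back into an $\St$-multiplicity. For finite-dimensional $W$ one has the adjunction $\mathrm{Hom}_{\Gq}(\St\otimes W,\sym)\cong\mathrm{Hom}_{\Gq}(\St,\sym\otimes W^*)$, and simplifying $(\wedge^{n-m}(V)\otimes\Det^{q-2-k})^*\cong\wedge^m(V)\otimes\Det^{-1}\otimes\Det^{k+2-q}\cong\wedge^m(V)\otimes\Det^k$ (using $\Det^{q-1}\cong\F$ once more) yields, for $q>2$,
\[
H\big(\mathrm{Hom}_{\Gq}(P(\ga),\sym);t\big)=H_{\St}\big(\sym\otimes\wedge^m(V)\otimes\Det^k;t\big),
\]
where for $k=q-1$ the right-hand side is read as $H_{\St}(\sym\otimes\wedge^m(V);t)$; for $q=2$ (so $k=1=q-1$) the extra Steinberg summand produces the correction
\[
H\big(\mathrm{Hom}_{\Gq}(P(\ga),\sym);t\big)=H_{\St}\big(\sym\otimes\wedge^m(V);t\big)-H_{\St}(\sym;t).
\]

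Finally I would substitute Theorem~\ref{thm:symwedg}. For $1\le k\le q-2$, Theorem~\ref{thm:symwedg}(1) yields Part~(1) at once; note that Part~(1) is vacuous when $q=2$. For $k=q-1$ (the case of Part~(2)), Theorem~\ref{thm:symwedg}(2) supplies the two-term bracket, while taking $m=0$ in Theorem~\ref{thm:symwedg} shows that $H_{\St}(\sym;t)$ equals the normalizing factor $t^{-n+(q^n-1)/(q-1)}/\prod_{i=1}^{n}(1-t^{q^i-1})$ appearing in Part~(2), accounting for the $\delta_{2,q}$-term; this completes Part~(2). The main obstacle is the bookkeeping of determinant twists: one must track them faithfully through $P(\la)^*\cong P(-w_0\la)$, $\St^*\cong\St$, $\wedge^j(V)^*\cong\wedge^{n-j}(V)\otimes\Det^{-1}$ and the tensor--Hom adjunction, verify the weight congruence $-w_0\ga\equiv\om_m^0+k\om_n$, and keep the $q=2$ case---where \eqref{eqn:Ts} acquires the extra $\St\otimes\Det^k$ summand---separated throughout; this last point is precisely what produces the $\delta_{2,q}$ correction.
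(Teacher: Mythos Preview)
Your proposal is correct and follows essentially the same route as the paper: both identify $P(\ga)$ with $\St\otimes\wedge^{n-m}(V)\otimes\Det^{q-2-k}$ (the paper writes the exponent as $-1-k$), then use the tensor--Hom adjunction to convert $\mathrm{Hom}_{\Gq}(P(\ga),\sym)$ into the Steinberg multiplicity in $\sym\otimes\wedge^m(V)\otimes\Det^k$, and finally apply Theorem~\ref{thm:symwedg}, with the $q=2$ correction coming from the extra $\St$ summand in \eqref{eqn:Ts}. The only cosmetic difference is that the paper obtains the identification directly by applying \eqref{eqn:Ts} with $m$ replaced by $n-m$ and recognizing $\om_{n-m}^0-(k+1)\om_n=\ga$, whereas you pass through $P(\ga)^*\cong P(-w_0\ga)$ and the congruence $-w_0\ga\equiv\om_m^0+k\om_n\pmod{(q-1)\om_n}$ before dualizing; these are the same computation read in opposite directions.
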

\begin{proof}
Suppose $1\leq k\leq q-2$; observe that this happens only when $q>2$.

By~(\ref{eqn:wedgdual}) and (\ref{eqn:Ts}) we have
\begin{align*}
\St\otimes(\wedge^m(V)\otimes\Det^k)^*
\cong \St\otimes \wedge^{n-m}(V)\otimes\Det^{-1-k}
\cong P(\om^0_{n-m}-(k+1)\om_n)=P(\ga).
\end{align*}
This implies that
\begin{align*}
{\rm Hom}_{\Gq} & (P(\ga),\sym)\\
\cong&{\rm Hom}_{\Gq}(\St\otimes(\wedge^m(V)\otimes\Det^k)^*,\sym)\\
\cong&{\rm Hom}_{\Gq}(\St,  \sym\otimes\wedge^m(V)\otimes\Det^k).
\end{align*}
Thus,  Part (1) of the theorem follows by
Theorem~\ref{thm:symwedg}(1).  By a similar argument and
Theorem~\ref{thm:symwedg}(2),  the second part in the case $q>2$
follows.

Suppose now  $q=2$. Note that the determinant module $\Det$
coincides with the trivial module.  Using ~(\ref{eqn:Ts}) we get
\begin{align*}
\St\otimes \wedge^m(V)^*
\cong \St\otimes \wedge^{n-m}(V)
\cong P(\ga)\oplus \St
\end{align*}
and hence, for $a\geq0$,
\begin{align*}
\dim  & {\rm Hom}_{\Gq}  (P(\ga),S^a(V))\\
 =& \dim {\rm Hom}_{\Gq}(\St\otimes \wedge^m(V)^*,S^a(V))
- \dim {\rm Hom}_{\Gq}(\St,S^a(V))\\
 =&\dim {\rm Hom}_{\Gq}(\St,  S^a(V)\otimes\wedge^m(V))
-\dim   {\rm Hom}_{\Gq}(\St,S^a(V)).
\end{align*}

The theorem for $q=2$ now follows from this identity and
Theorem~\ref{thm:symwedg}(2). Note a special case of
Theorem~\ref{thm:symwedg}(2) for $m=0$ says that the graded
multiplicity of the Steinberg module $\St$ in the symmetric algebra
is given by $t^{-n+\frac{q^n-1}{q-1}}
\prod^n_{i=1}(1-t^{q^i-1})^{-1}. $
\end{proof}

\begin{rem}
Theorem~\ref{thm:grcompmult1} under the additional assumption that
$q$ is a prime was first established by Carlisle and Walker
\cite[Corollary~1.4]{CW} using a completely different method.
Their formula is equivalent to ours  since 
\begin{align*}
&\frac{t^{-n+(q-k)\frac{q^n-1}{q-1}}}{\prod^n_{i=1}(1-t^{q^i-1})}~
e_m(t^{-1},t^{-q},\ldots, t^{-q^{n-1}})\\
=&\frac{t^{-k\frac{q^n-1}{q-1}}}{\prod^n_{i=1}(1-t^{q^i-1})}~
\sum_{0\leq i_1<i_2<\ldots<i_m\leq n-1}t^{q+q^2+\ldots+q^n-q^{i_1}-\ldots-q^{i_m}-n}.
\end{align*}
\end{rem}
%

\subsection{A general composition multiplicity  formula}

Throughout this subsection, we assume that $p>n$,  the Coxeter
number of $\Gq$.

We denote by
$\text{br}(N)$ the Brauer character of a $\Gq$-module $N$ (cf. \cite{Hu}). For any $W$-invariant
element $\ga \in \Z[X]^W$ with $W =S_n$, one writes it as a linear combination in terms of
the formal characters of the simple modules $L(\la)$ for $\la \in X_r$
and define its Brauer character $\text{Br} (\ga)$ to be the corresponding
linear combination of $\text{Br} (L(\la))$'s.
Denote by $W^{\mu}$
a set of coset representatives in $W$ of the stabilizer subgroup
$W_{\mu}$ of $\mu$. Then the orbit sum
$\sum_{\sigma \in W^\mu} e^{\sigma\mu} \in\Z[X]^W$ has the monomial
symmetric polynomial $m_\mu(e^{\var_1}, \ldots, e^{\var_n})$ as its formal character,
and its Brauer character will be denoted by $\text{Br}(m_\mu)$.
Let $\phi $ denote the
Brauer character of $\St$.
The following is essentially a result of
Ballard \cite[Proposition~7.2]{Ba} in the case of $\Gq$.

\begin{lem} \label{lem:Ballard}
Assume that $p>n$. If $\mu\in X_r$ satisfies $\mu_1-\mu_n\leq p-1$,
then the Brauer character of the projective $\Gq$-module $P(\mu^0)$
is equal to $\phi \text{Br}(m_{\mu})$. In particular,
$$
{\dim} P(\mu^0)= {\dim}\, \St \cdot|W^{\mu}|
=\frac{\prod^{n-1}_{i=0} (q^n-q^i)}{\prod^n_{i=1}(q^i-1)}  m_{\mu} (1,1,\ldots,1).
$$
\end{lem}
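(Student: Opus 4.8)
\textbf{Proof plan for Lemma~\ref{lem:Ballard}.}
The plan is to deduce the statement from Ballard's description of the principal indecomposable modules of $\Gq$ together with standard facts about Steinberg tensor products. The key observation is that the weight $\mu^0 = (q-1)\rho + w_0\mu$ can be written as a $q$-restricted weight precisely because $\mu_1-\mu_n \le p-1 \le q-1$ guarantees that the coordinates $(q-1)\rho_i + (w_0\mu)_i = (q-1)(n-i) + \mu_{n+1-i}$ form a strictly decreasing sequence staying in the fundamental alcove, so $L(\mu^0)$ makes sense as a simple $\Gq$-module and $P(\mu^0)$ as its projective cover. First I would recall from Ballard \cite{Ba} (in the improved form of \cite{Ch}, \cite{J1}) that, under the hypothesis $p>n$, the projective cover $P(\mu^0)$ of $L(\mu^0)$ has Brauer character $\phi\,\mathrm{Br}(m_\mu)$; this is exactly \cite[Proposition~7.2]{Ba} once one matches up the parametrization of PIMs by $q$-restricted weights with the shifted parametrization $\mu \mapsto \mu^0$, and the restriction $\mu_1-\mu_n\le p-1$ is what keeps us inside the range where Ballard's formula is valid. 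The essential input on the representation-theoretic side is that $\St$ is both irreducible and projective, so tensoring any module by $\St$ preserves projectivity, and the factorization of characters $\mathrm{br}(\St \otimes M) = \phi\cdot\mathrm{br}(M)$ holds on the level of Brauer characters.

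Granting the character statement, the dimension formula follows by evaluating Brauer characters at the identity: $\dim P(\mu^0) = \phi(1)\cdot \mathrm{Br}(m_\mu)(1) = \dim\St \cdot m_\mu(1,1,\ldots,1)$. Here I would use that $m_\mu(1,\ldots,1)$ counts the size of the $W$-orbit of $\mu$, i.e. $|W^\mu| = |W|/|W_\mu|$, and that $\dim\St = q^{\binom n2} = \prod_{i=0}^{n-1}(q^i)$ is the $p'$-part of $|\Gq|$; combining with $|\Gq| = q^{\binom n2}\prod_{i=1}^n(q^i-1)$ gives
\begin{align*}
\dim\St \cdot |W^\mu| = \frac{\prod_{i=0}^{n-1}(q^n-q^i)}{\prod_{i=1}^n(q^i-1)}\, m_\mu(1,\ldots,1),
\end{align*}
since $\prod_{i=0}^{n-1}(q^n-q^i) = q^{\binom n2}\prod_{i=1}^n(q^i-1)$. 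This is a routine manipulation of $q$-factorials, so I would not belabor it.

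The main obstacle, and the place where care is needed, is verifying that the hypotheses of Ballard's proposition are genuinely met: one must check that $p>n$ (the Coxeter number) places us in the regime where $\St\otimes(\text{something})$ decomposes with the expected Brauer character, and that the bound $\mu_1-\mu_n\le p-1$ is exactly the condition ensuring $\mu^0$ is $q$-restricted and lies in the alcove where the PIM character is uniform across the $W$-orbit. I would also make explicit the translation between Ballard's notation (which typically indexes PIMs directly by restricted weights or by a dot-action shift) and the notation $\mu^0 = (q-1)\rho + w_0\mu$ used here, since a sign or a $w_0$ in the wrong place would break the orbit-sum identification. Once that bookkeeping is in place, the rest is formal: the character identity is Ballard's, and the dimension count is elementary.
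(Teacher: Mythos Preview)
Your proposal is correct and matches the paper's treatment: the paper does not give an independent proof but simply cites \cite[Proposition~7.2]{Ba} (with the bound improved to $\mu_1-\mu_n\le p-1$ via \cite{Ch}, \cite{J1}, as noted in Remark~\ref{rem:Chastkofsky}), and your derivation of the dimension formula by evaluating the Brauer character at $1$ and using $\prod_{i=0}^{n-1}(q^n-q^i) = q^{\binom{n}{2}}\prod_{i=1}^n(q^i-1)$ is exactly the routine computation the paper leaves implicit.
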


\begin{rem} \label{rem:Chastkofsky}
For $q=p^r$ with $r\geq 2$, the bound in \cite[Proposition 7.2]{Ba}
can be stated as $\mu_1-\mu_n\leq p-1$ as above, while for $q=p$,
Ballard imposed the more restrictive assumption that
$\mu_1-\mu_n < (p-1)/2$ (because of using \cite[Lemma 7.3]{Ba}).
Chastkofsky remarked in his math review on \cite{Ba} that the bound
in Ballard's paper can always be improved to $\mu_1-\mu_n\leq p-1$,
using his work  \cite{Ch} (also see Jantzen \cite{J1} for closely
related results).
%
%
\end{rem}

Recall that (cf. \cite{FH}) elementary symmetric functions can be
expressed in terms of monomial symmetric functions as follows:
\begin{equation}\label{eqn:em}
e_{\nu}=\sum_{\mu\leq\nu'}a_{\nu\mu}m_{\mu},
\end{equation}
where $a_{\nu\mu}$ are nonnegative integers and $a_{\nu\nu'}=1$.
This can be seen by expressing $e_\nu$ in terms of Schur functions
$s_\la$ and then $s_\la$ in terms of $m_\mu$.
\begin{prop}\label{decompStIrr}
Suppose  $p>n$ and $0\leq k\leq q-2$. Let $\nu$ be a partition such
that $\nu_1\leq n$  and $\nu'_1-\nu'_n\leq p-1$. Then the
$\Gq$-module $\St\otimes \wedge^{\nu}(V)\otimes\Det^k$ can be
decomposed as
\begin{equation*}
\St\otimes  \wedge^{\nu}(V)\otimes\Det^k
\cong
\bigoplus_{\tau\leq\nu', \ell(\tau) \le n} P(\tau^0+k\om_n)^{\oplus a_{\nu\tau}},
\end{equation*}
where $a_{\nu\tau}$  is defined in (\ref{eqn:em}).
\end{prop}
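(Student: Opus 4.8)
The plan is to combine the Brauer-character computation of Ballard (Lemma~\ref{lem:Ballard}) with the symmetric-function expansion \eqref{eqn:em}, together with the projectivity and self-duality properties of $\St$, to pin down the decomposition. First I would observe that $\St\otimes\wedge^{\nu}(V)\otimes\Det^k$ is a projective $\Gq$-module, since $\St$ is projective and tensoring a projective module with any finite-dimensional module stays projective; hence it is a direct sum of PIMs $P(\mu)$ with $\mu\in X_r$, and to identify this sum it suffices to compute its Brauer character and match it against the known Brauer characters of the $P(\mu^0+k\om_n)$ supplied by Lemma~\ref{lem:Ballard}. (One should check the hypothesis of that lemma applies: for $\tau\le\nu'$ appearing in \eqref{eqn:em} we have $\tau_1\le\nu'_1$ and the weight $\tau^0+k\om_n$ lies in $X_r$ with the relevant difference bounded by $\nu'_1-\nu'_n\le p-1$, using $k\le q-2$; this is where the running hypotheses $p>n$ and $\nu'_1-\nu'_n\le p-1$ get used.)

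Next I would compute $\text{br}\big(\St\otimes\wedge^{\nu}(V)\otimes\Det^k\big) = \phi\cdot\text{br}\big(\wedge^{\nu}(V)\big)\cdot\text{br}(\Det^k)$. The Brauer character of $\wedge^{\nu}(V)=\wedge^{\nu_1}(V)\otimes\cdots\otimes\wedge^{\nu_\ell}(V)$ is the product of the elementary symmetric polynomials $e_{\nu_j}$ evaluated at the eigenvalues of a semisimple element, i.e.\ it is $e_{\nu}(e^{\var_1},\ldots,e^{\var_n})$ as a symmetric function in the $n$ torus coordinates; multiplying by $\text{br}(\Det^k)$ shifts this by $k\om_n$. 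Using the expansion $e_{\nu}=\sum_{\tau\le\nu'}a_{\nu\tau}m_{\tau}$ from \eqref{eqn:em}, the Brauer character becomes $\phi\cdot\sum_{\tau\le\nu',\,\ell(\tau)\le n}a_{\nu\tau}\,\text{Br}(m_{\tau})\cdot(\text{shift by }k\om_n)$, and I would match each summand $\phi\,\text{Br}(m_{\tau})$ (shifted) with $\text{br}\big(P(\tau^0+k\om_n)\big)$ via Lemma~\ref{lem:Ballard} and the defining relation $\tau^0=(q-1)\rho+w_0\tau$ in \eqref{eqn:wt0}. Terms with $\ell(\tau)>n$ contribute a vanishing monomial symmetric function in $n$ variables, so they drop out, consistent with the stated range of the sum.

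Since the Brauer characters of the PIMs $\{P(\mu)\}_{\mu\in X_r}$ are linearly independent (they form a basis dual to the irreducible Brauer characters, or one can invoke that projective modules are determined by their Brauer characters), matching Brauer characters forces the claimed direct-sum decomposition of modules, not merely an equality of characters. I expect the main obstacle to be the bookkeeping of weights: verifying that $\tau^0+k\om_n$ is genuinely $q$-restricted and that Lemma~\ref{lem:Ballard} applies to it (the difference of first and last coordinates is $(q-1)(n-1)-\tau_1+\tau_n$, which one must bound by $p-1$ under the hypothesis $\nu'_1-\nu'_n\le p-1$ and $k\le q-2$), and making sure the Weyl-group twist $w_0$ in the definition of $\tau^0$ interacts correctly with conjugate partitions and with the identification of $\text{br}(\wedge^{\nu}(V))$ as $e_\nu$ rather than some $w_0$-twisted variant. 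Everything else is a routine translation between symmetric-function identities and Brauer-character identities.
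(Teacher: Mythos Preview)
Your proposal is correct and follows essentially the same route as the paper: use projectivity of $\St$ to reduce to a Brauer-character identity, expand $\text{br}(\wedge^{\nu}(V))=e_{\nu}$ via \eqref{eqn:em} into monomial symmetric functions, and invoke Lemma~\ref{lem:Ballard} to identify each summand $\phi\,\text{Br}(m_{\tau})$ (twisted by $\Det^k$) with the Brauer character of $P(\tau^0+k\om_n)$. One small correction to your bookkeeping paragraph: the hypothesis of Lemma~\ref{lem:Ballard} is a bound on $\mu_1-\mu_n$ for the partition $\mu$ with $P(\mu^0)$ appearing, so what you actually need is $\tau_1-\tau_n\le p-1$ (which follows from $\tau\le\nu'$ since dominance gives $\tau_1\le\nu'_1$ and $\tau_n\ge\nu'_n$), not a bound on the coordinate difference $(q-1)(n-1)-\tau_1+\tau_n$ of $\tau^0$ itself.
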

\begin{proof}
As the module $\St\otimes  \wedge^{\nu}(V)\otimes\Det^k$ is known to
be projective, it suffices to check the isomorphism in the
proposition on the Brauer character level. We can further assume
$k=0$, as the general case is then obtained easily by tensoring by
$\Det^k$. By (\ref{eqn:em}), the Brauer character of $\St\otimes
\wedge^{\nu}(V)$ can be written as
\begin{align*}
{\rm Br} (\St\otimes  \wedge^{\nu}(V) )
=\phi {\rm Br} (\wedge^{\nu}(V) )
=\sum_{\tau\leq\nu', \ell(\tau) \le n}a_{\nu\tau}\phi \text{Br}(m_{\tau}).
\end{align*}
Observe that  if $\tau\leq\nu'$ then
$\tau_1-\tau_n\leq\nu'_1-\nu'_n\leq p-1$. It follows by
Lemma~\ref{lem:Ballard} that $\phi \text{Br}(m_{\tau})$ is the Brauer character of
the projective indecomposable module $P(\tau^0)$. So we have established
the desired identity of Brauer characters.
\end{proof}

The projective $\Gq$-module $P(\la)$ for $\la\in X_r$ has both head
and socle isomorphic to $L(\la)$, and therefore $P(\la)^*\cong
P(-w_0 \la)$. Then thanks to the fact that $\St^* \cong \St$,
Proposition~\ref{decompStIrr} can be converted into the following.

\begin{cor}\label{cor:decompStIrr*}
Suppose  $p>n$ and $0\leq k\leq q-2$. Let $\nu$ be a partition such
that $\nu_1\leq n$  and $\nu'_1-\nu'_n\leq p-1$.
Then the $\Gq$-module $\St\otimes \wedge^{\nu}(V)^*\otimes\Det^k$ can be decomposed as %
\begin{align*}\St\otimes \wedge^{\nu}(V)^*\otimes\Det^k
\cong
\sum_{\tau\leq\nu', \ell(\tau) \le n} P((q-1)\rho-\tau+k\om_n)^{\oplus a_{\nu\tau}}.
\end{align*}
\end{cor}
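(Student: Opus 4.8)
The plan is to deduce Corollary~\ref{cor:decompStIrr*} from Proposition~\ref{decompStIrr} by dualizing, using the two structural facts already recalled in the paragraph preceding the corollary: $\St^*\cong\St$ (the Steinberg module is self-dual), and $P(\la)^*\cong P(-w_0\la)$ for $\la\in X_r$ (since $P(\la)$ has both head and socle isomorphic to $L(\la)$, its dual is the projective cover of $L(\la)^*\cong L(-w_0\la)$). I would also use the elementary identity $(\wedge^\nu(V)\otimes\Det^k)^*\cong \wedge^\nu(V)^*\otimes\Det^{-k}$, together with the observation that tensoring by $\Det^{q-1}$ is trivial, so that the $\Det$-twist can be tracked modulo $q-1$.

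First I would note that $\St\otimes\wedge^\nu(V)^*\otimes\Det^k$ is projective (being a tensor product of the projective module $\St$ with a finite-dimensional module), so it suffices to match Brauer characters, or equivalently to identify it as the dual of a known projective module. Concretely, apply Proposition~\ref{decompStIrr} with $k$ replaced by an appropriate value: since $0\le k\le q-2$, we may consider $\St\otimes\wedge^\nu(V)\otimes\Det^{q-1-k}$, which by Proposition~\ref{decompStIrr} (valid because $0\le q-1-k\le q-2$ when $1\le k\le q-1$; the boundary case $k=0$ is handled directly) decomposes as $\bigoplus_{\tau\le\nu',\,\ell(\tau)\le n}P(\tau^0+(q-1-k)\om_n)^{\oplus a_{\nu\tau}}$. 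Then I would take duals of both sides.

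The key computation is to identify the dual of the summand $P(\tau^0+(q-1-k)\om_n)$. Writing $\tau^0=(q-1)\rho+w_0\tau$ from \eqref{eqn:wt0}, we have $\tau^0+(q-1-k)\om_n=(q-1)\rho+w_0\tau+(q-1-k)\om_n$. Applying $-w_0$ and using $w_0\rho=-\rho+(\text{something fixed by }w_0)$—more carefully, using that $-w_0\om_n=\om_n$ and $-w_0\big((q-1)\rho\big)=(q-1)\rho-(q-1)(n-1)\om_n$ since $\rho+w_0\rho=-(n-1)\om_n$ for $\mathfrak{gl}_n$—one computes $-w_0(\tau^0+(q-1-k)\om_n)=(q-1)\rho-\tau-(k)\om_n$ up to a multiple of $(q-1)\om_n$, i.e.\ up to tensoring by a power of $\Det^{q-1}$, which is trivial. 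Thus $P(\tau^0+(q-1-k)\om_n)^*\cong P((q-1)\rho-\tau+k\om_n)$ as $\Gq$-modules after the harmless $\Det^{q-1}$-adjustment. Meanwhile the left side dualizes to $(\St\otimes\wedge^\nu(V)\otimes\Det^{q-1-k})^*\cong\St\otimes\wedge^\nu(V)^*\otimes\Det^{-(q-1-k)}\cong\St\otimes\wedge^\nu(V)^*\otimes\Det^{k}$, again modulo $\Det^{q-1}$. Matching the two sides gives the claimed decomposition.

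The main obstacle I anticipate is bookkeeping with the $\Det$-twists and the $w_0$-action on weights: one must be careful that the weight $(q-1)\rho-\tau+k\om_n$ lands in $X_r$ (so that $P$ of it is defined) and that all the intermediate $\Det$-exponents, which a priori might be negative or exceed $q-1$, are reduced correctly modulo $q-1$ using triviality of $\Det^{q-1}$; the condition $\ell\le q-1$ implicit in $\nu'_1-\nu'_n\le p-1$ together with $0\le k\le q-2$ should make every exponent fall in the right range after reduction. A secondary point to verify is that $-w_0$ sends the indexing set $\{\tau\le\nu',\ \ell(\tau)\le n\}$ and the coefficients $a_{\nu\tau}$ to themselves in the way the statement asserts—i.e.\ that no reindexing of the $a_{\nu\tau}$ is needed—which follows because $-w_0$ acts trivially on the symmetric-function data $a_{\nu\tau}$ and the whole effect is absorbed into the shift $\tau^0\mapsto(q-1)\rho-\tau$. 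Once these normalizations are pinned down, the proof is a formal dualization of Proposition~\ref{decompStIrr}.
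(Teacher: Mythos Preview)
Your approach—dualize Proposition~\ref{decompStIrr} using $\St^*\cong\St$ and $P(\la)^*\cong P(-w_0\la)$—is exactly the paper's; indeed the paper's entire proof is the sentence immediately preceding the corollary. Do clean up the weight arithmetic: $-w_0\om_n=-\om_n$ (not $+\om_n$) and $\rho+w_0\rho=(n-1)\om_n$ (not $-(n-1)\om_n$), so the correct computation is $-w_0\big(\tau^0+(q-1-k)\om_n\big)=(q-1)\rho-\tau+k\om_n-n(q-1)\om_n$, which after reduction modulo $(q-1)\om_n$ gives the $+k\om_n$ you ultimately (and correctly) assert.
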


Below is a main result of this section.
\begin{thm} \label{thm:grcompmult2}
Suppose $p>n$ and $0\leq k\leq q-2$. Let $\mu$ be a partition with
$\ell(\mu)\leq n$ and $\mu_1-\mu_n\leq p-1$.

(1) If $\mu_1\leq k$, then the graded composition multiplicity of
$L((q-1)\rho-\mu+k\om_n)$ in $\sym$ is
    \begin{align*}
    \ds\frac{t^{-n+(k+1)\frac{q^n-1}{q-1}}}
    {\prod^n_{i=1}(1-t^{q^i-1})}~m_{\mu}(t^{-1},t^{-q},\ldots,t^{-q^{n-1}}).
    \end{align*}

(2) If $\mu_1+k<q-1$, then the graded composition multiplicity of
$L((q-1)\rho+w_0\mu+k\om_n)$ in $\sym$ is
\begin{align*}
\ds\frac{t^{-n+(k+1)\frac{q^n-1}{q-1}}}
{\prod^n_{i=1}(1-t^{q^i-1})}~m_{\mu}(t,t^{q},\ldots,t^{q^{n-1}}).
\end{align*}
\end{thm}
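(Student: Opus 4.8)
The plan is to reduce Theorem~\ref{thm:grcompmult2} to the multiplicity formulas for $\St$ in $\sym \otimes \wedge^\nu(V) \otimes \Det^k$ (Theorem~\ref{symwedgnu}, Corollary~\ref{cor:symwedgnu*}) together with the PIM decompositions in Proposition~\ref{decompStIrr} and Corollary~\ref{cor:decompStIrr*}. The starting point is the standard adjunction
\[
{\rm Hom}_{\Gq}(P(\la),\sym) \cong {\rm Hom}_{\Gq}(\St \otimes N^*, \sym) \cong {\rm Hom}_{\Gq}(\St, \sym \otimes N)
\]
whenever $P(\la)$ is a direct summand of the projective module $\St \otimes N^*$; applied to $N = \wedge^\nu(V) \otimes \Det^k$, this converts the left side (whose Hilbert series is, by definition, the graded composition multiplicity of $L(\la)$ in $\sym$) into a graded $\St$-multiplicity that Theorem~\ref{symwedgnu} computes in closed form.

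For Part~(1), I would take $\mu$ as given (with $\ell(\mu)\le n$, $\mu_1-\mu_n\le p-1$, $\mu_1\le k$) and apply Corollary~\ref{cor:decompStIrr*} to a suitable partition $\nu$ with $\nu' = \mu$ (so $\nu = \mu'$, which has $\nu_1 = \ell(\mu)\le n$ and $\nu'_1 - \nu'_n = \mu_1-\mu_n\le p-1$). The corollary then decomposes $\St\otimes\wedge^{\mu'}(V)^*\otimes\Det^{k}$ (or a small twist thereof, with exponent on $\Det$ chosen to land on $k$ after accounting for $\Det^{-1}\cong\Det^{q-2}$ and the shift in \eqref{eqn:wedgdualnu}) as a direct sum of PIMs $P((q-1)\rho-\tau+k'\om_n)^{\oplus a_{\mu'\tau}}$ over $\tau\le\mu$, with $a_{\mu'\mu}=1$ picking out the target module $P((q-1)\rho-\mu+k\om_n)$ with coefficient one. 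Taking Hilbert series of $ {\rm Hom}_{\Gq}(-,\sym)$ on both sides and using Corollary~\ref{cor:symwedgnu*} to evaluate the left side as $\frac{t^{-n+(q-k)\frac{q^n-1}{q-1}}}{\prod(1-t^{q^i-1})}\,e_{\mu'}(t,t^q,\dots,t^{q^{n-1}})$, one gets a linear system over the dominance order whose matrix is the unitriangular matrix $(a_{\mu'\tau})$ from \eqref{eqn:em}. Inverting it (equivalently, using the symmetric-function identity $e_{\nu}=\sum_{\mu\le\nu'}a_{\nu\mu}m_\mu$ to solve for $m_\mu$) and running an induction on the dominance order of $\mu$ — exactly parallel to the induction in the proof of Theorem~\ref{thm:symirr} — isolates the $m_\mu$ term and yields the stated formula $\frac{t^{-n+(k+1)\frac{q^n-1}{q-1}}}{\prod(1-t^{q^i-1})}\,m_\mu(t^{-1},\dots,t^{-q^{n-1}})$; one needs to check that $(q-k)\frac{q^n-1}{q-1}$ combines with the evaluation variables to give the asserted exponent $(k+1)\frac{q^n-1}{q-1}$, using $m_\mu(t^{-1},\dots) = t^{-|\mu|(\cdots)}m_\mu(t,\dots)$-type homogeneity together with $\sum_i q^{i-1} = \frac{q^n-1}{q-1}$.

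Part~(2) is the mirror image: with $\mu_1+k<q-1$ one instead applies Proposition~\ref{decompStIrr} (rather than its dual) to $\nu=\mu'$, decomposing $\St\otimes\wedge^{\mu'}(V)\otimes\Det^{k}\cong\bigoplus_{\tau\le\mu}P(\tau^0+k\om_n)^{\oplus a_{\mu'\tau}}$, noting $\mu^0+k\om_n = (q-1)\rho+w_0\mu+k\om_n$ by \eqref{eqn:wt0}, and then repeating the adjunction, Theorem~\ref{symwedgnu}, and dominance-order induction verbatim. The constraint $\mu_1+k\le q-2$ is what makes $k+\ell(\mu')=k+\mu_1\le q-1$ so that Theorem~\ref{symwedgnu} applies; the constraint $\mu_1-\mu_n\le p-1$ (hence $\nu'_1-\nu'_n\le p-1$, and $\tau_1-\tau_n\le p-1$ for $\tau\le\mu$) is what lets Lemma~\ref{lem:Ballard}, and through it Proposition~\ref{decompStIrr}, identify each $\phi\,\text{Br}(m_\tau)$ with a genuine PIM.

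The main obstacle I anticipate is purely bookkeeping rather than conceptual: one must track the $\Det$-exponent carefully through the two dualities \eqref{eqn:wedgdual}/\eqref{eqn:wedgdualnu} and through $P(\la)^*\cong P(-w_0\la)$, and verify that for every $\tau\le\mu$ appearing in the decomposition the hypotheses of the relevant multiplicity theorem ($k+\ell\le q-1$, or $\ell<k$) and of Lemma~\ref{lem:Ballard} ($\tau_1-\tau_n\le p-1$) remain satisfied, so that the inductive step is legitimate at every node of the poset. Once the case hypotheses are seen to be inherited downward in dominance order, the induction closes and the symmetric-function identity $e_{\nu}=\sum a_{\nu\mu}m_\mu$ delivers the final $m_\mu$ formula with no further work.
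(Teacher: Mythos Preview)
Your strategy is exactly the paper's: induct on the dominance order of $\mu$, use the PIM decompositions (Proposition~\ref{decompStIrr} and Corollary~\ref{cor:decompStIrr*}) together with adjunction to convert the Hilbert series of ${\rm Hom}_{\Gq}(P(\cdots),\sym)$ into an $\St$-multiplicity computed by Theorem~\ref{symwedgnu} or Corollary~\ref{cor:symwedgnu*}, and then strip off the lower terms in $e_{\mu'}=\sum_{\tau\le\mu}a_{\mu'\tau}m_\tau$ using the inductive hypothesis (which is legitimate since $\tau\le\mu$ forces $\tau_1\le\mu_1$ and $\tau_1-\tau_n\le\mu_1-\mu_n$, so all constraints are inherited downward).

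The one bookkeeping slip --- which you rightly anticipated as the main danger --- is that you have Theorem~\ref{symwedgnu} and Corollary~\ref{cor:symwedgnu*} swapped between the two parts. For Part~(1), the adjunction reads
\[
{\rm Hom}_{\Gq}\bigl(\St\otimes\wedge^{\mu'}(V)^*\otimes\Det^k,\ \sym\bigr)\ \cong\ {\rm Hom}_{\Gq}\bigl(\St,\ \sym\otimes\wedge^{\mu'}(V)\otimes\Det^{q-1-k}\bigr),
\]
with \emph{no} dual on the exterior power on the right, so it is Theorem~\ref{symwedgnu} (applied with $k'=q-1-k$, whence $q-k'=k+1$ and the hypothesis $k'+\ell(\mu')\le q-1$ becomes precisely $\mu_1\le k$) that directly yields the prefactor $t^{-n+(k+1)\frac{q^n-1}{q-1}}$ times $e_{\mu'}(t^{-1},\ldots,t^{-q^{n-1}})$; no conversion is needed. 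Your proposed ``$m_\mu(t^{-1},\ldots)=t^{-|\mu|(\cdots)}m_\mu(t,\ldots)$-type homogeneity'' is actually false for general $\mu$ (try $\mu=(2,1)$, $n=2$) and is purely an artifact of this swap. Symmetrically, Part~(2) lands on $\sym\otimes\wedge^{\mu'}(V)^*\otimes\Det^{q-1-k}$ after adjunction and is handled by Corollary~\ref{cor:symwedgnu*}, whose hypothesis $\ell(\mu')<q-1-k$ is exactly $\mu_1+k<q-1$. With this swap your outline matches the paper's proof verbatim.
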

\begin{proof}
%

(1) Suppose $\mu_1\leq k$.  We shall prove by induction on dominance
order of $\mu$ the equivalent claim that the Hilbert series of ${\rm
Hom}_{\Gq}(P((q-1)\rho-\mu+k\om_n), \sym)$ is given by
\begin{align*}
    \ds\frac{t^{-n+(k+1)\frac{q^n-1}{q-1}}}
    {\prod^n_{i=1}(1-t^{q^i-1})}~m_{\mu}(t^{-1},t^{-q},\ldots,t^{-q^{n-1}}).
\end{align*}

For $\mu$ minimal in the sense that it has no partition $\tau<\mu$
with $\ell (\tau) \le n$,  by~(\ref{eqn:em}) we have
$$
e_{\mu'}(t^{-1},t^{-q},\ldots,t^{-q^{n-1}})=m_{\mu}(t^{-1},t^{-q},\ldots,t^{-q^{n-1}})
$$
and moreover by Corollary~\ref{cor:decompStIrr*} we have
$$
P((q-1)\rho-\mu+k\om_n)\cong \St\otimes \wedge^{\mu'}(V)^*\otimes\Det^k.
$$
Hence
\begin{align*}
{\rm Hom}_{\Gq} & (P((q-1)\rho-\mu+k\om_n),\sym)\\
\cong&{\rm Hom}_{\Gq}(\St\otimes \wedge^{\mu'}(V)^*\otimes\Det^k,\sym)\\
\cong &{\rm Hom}_{\Gq}(\St,\sym\otimes\wedge^{\mu'}(V)\otimes\Det^{q-1-k}).
\end{align*}
The claim for such a minimal weight $\mu$ follows
by Theorem~\ref{symwedgnu}
since $1\leq q-1-k\leq q-1$ and $\mu_1+(q-1-k)\leq q-1$ by the assumption
$\mu_1\leq k$.

If $\tau$ is a partition satisfying $\tau<\mu$,
 then $\tau_1\leq\mu_1\leq k$ and  $\tau_1-\tau_n\leq\mu_1-\mu_n\leq p-1$.
 Hence by inductive assumption the Hilbert series of
${\rm Hom}_{\Gq}(P((q-1)\rho-\tau+k\om_n),\sym)$
for $\tau<\mu$ is given by
\begin{align*}
    \ds\frac{t^{-n+(k+1)\frac{q^n-1}{q-1}}}
    {\prod^n_{i=1}(1-t^{q^i-1})}~m_{\tau}(t^{-1},t^{-q},\ldots,t^{-q^{n-1}}).
\end{align*}
By Corollary~\ref{cor:decompStIrr*} and Theorem~\ref{symwedgnu}
the Hilbert series of ${\rm Hom}_{\Gq}(P((q-1)\rho-\mu+k\om_n),\sym)$
has the form
\begin{align*}
&\ds\frac{t^{-n+(k+1)\frac{q^n-1}{q-1}}}
    {\prod^n_{i=1}(1-t^{q^i-1})}\Big(~e_{\mu'}(t^{-1},t^{-q},\ldots,t^{-q^{n-1}})
-\sum_{\tau<\mu}
~a_{\mu'\tau}m_{\tau}(t^{-1},t^{-q},\ldots,t^{-q^{n-1}})\Big)\\
=&\ds\frac{t^{-n+(k+1)\frac{q^n-1}{q-1}}}
    {\prod^n_{i=1}(1-t^{q^i-1})}~m_{\mu}(t^{-1},t^{-q},\ldots,t^{-q^{n-1}}).
\end{align*}
Here we have used the identity that
$
e_{\mu'}  =\sum_{ \tau\leq\mu}a_{\mu'\tau}m_{\tau}
$
and $a_{\mu'\mu}=1$.
Therefore the first part of theorem is proved.

The second part of
the theorem follows from a similar argument together using
Proposition~\ref{decompStIrr} and Corollary~\ref{cor:symwedgnu*}.
\end{proof}

\begin{rem}
There is a duality formulated in \cite[Proposition~2.12]{CW} between
the graded composition multiplicity of $L(\mu)$ and that of
$L(\mu)^*$ in general. The two parts of
Theorem~\ref{thm:grcompmult2} fit well with such a duality.
\end{rem}


\subsection{The coinvariant algebra}

According to a classical theorem of Dickson \cite{Di}, the algebra
of $\Gq$-invariants $\sym^{GL_n(q)}$ is a polynomial algebra in $n$
generators, and its Hilbert series is given by
\begin{equation}   \label{eq:Dickson}
\frac{1}{\prod^{n-1}_{i=0}(1-t^{q^n-q^i})}.
\end{equation}

Consider the following quotient algebra
\begin{equation*}
S^\bullet (V)_{GL_n(q)} :=\sym/I^\bullet_+,
\end{equation*}
where $I^\bullet_+$ denotes the ideal of $\sym$ generated by
homogeneous elements of positive degree in $\sym^{GL_n(q)}$. The
graded algebra $S^\bullet(V)_{GL_n(q)}$ is called the {\em
coinvariant algebra} for $\Gq$. We recall the following basic
results of Mitchell.

\begin{lem} \cite[Proposition~1.3, Theorem~1.4]{Mi}   \label{lem:Mitchell}
As $\Gq$-modules,

(1)
$\sym$  has the same composition series as $\sym^{\Gq}\otimes
\sym_{\Gq}$;

(2) $\sym_{\Gq}$ has the same composition series as  the regular
representation $\F\Gq$.
\end{lem}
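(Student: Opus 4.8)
The plan is to exploit the classical Dickson structure theorem together with the two-sided nature of the action. For part (1), the key structural fact is that $\sym$ is a \emph{free} module over the subalgebra of invariants $\sym^{\Gq}$: since $\sym^{\Gq}$ is by Dickson's theorem a polynomial algebra in $n$ algebraically independent homogeneous generators $d_1,\dots,d_n$ (the Dickson invariants) and $\sym$ is a polynomial ring in $n$ variables that is finitely generated as a module over this subalgebra, a standard graded-commutative-algebra argument (a regular sequence in a Cohen--Macaulay graded ring, or Serre's criterion) shows $\sym$ is free over $\sym^{\Gq}$. Comparing Hilbert series, the rank of this free module equals $|\Gq|/(\text{something})$; more precisely, $\sym \cong \sym^{\Gq}\otimes_{\F} \sym_{\Gq}$ as \emph{graded vector spaces}, where $\sym_{\Gq}$ is the coinvariant algebra. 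First I would upgrade this to a statement about $\Gq$-modules: choosing a graded $\Gq$-stable vector-space complement to $I^\bullet_+$ inside $\sym$ (possible since $\F\Gq$ need not be semisimple, but we only need a complement as graded vector spaces that is $\Gq$-stable, which exists because $I^\bullet_+$ is a graded $\Gq$-submodule and one may pick $\Gq$-stable complements degree by degree using that $I^\bullet_+$ is a submodule --- in fact any vector space complement works at the level of composition factors, so one does not even need $\Gq$-stability). The multiplication map $\sym^{\Gq}\otimes_{\F}\sym_{\Gq}\to \sym$ is then a map of graded $\Gq$-modules (with $\Gq$ acting trivially on $\sym^{\Gq}$) which is a bijection on the underlying graded vector spaces by freeness; hence it is an isomorphism of graded $\Gq$-modules, which in particular gives equality of composition series in each degree.

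For part (2), I would combine part (1) with a Hilbert-series and character computation. Taking Brauer characters (or just formal graded characters) in part~(1), one gets $\mathrm{ch}(\sym) = H(\sym^{\Gq};t)\cdot \mathrm{ch}(\sym_{\Gq})$. On the other hand, there is the classical identity, valid for any finite group $G$ acting on a polynomial ring $\F[V]$ with $\F[V]^G$ polynomial, that $\sym$ as a $\Gq$-module is, degree by degree, ``asymptotically'' the regular representation; concretely one shows $\mathrm{ch}(\sym_{\Gq}) = \mathrm{ch}(\F\Gq)$ by checking that both sides have the same dimension and the same multiplicity of each simple module. The dimension of $\sym_{\Gq}$ is $\big(\prod_{i=1}^n(t^{q^i-1}\text{-type factors})\big)\big|_{t=1}$-computation, namely $\lim_{t\to 1}\, H(\sym;t)\,\prod_{i=0}^{n-1}(1-t^{q^n-q^i})$, which equals $\prod_{i=0}^{n-1}(q^n-q^i) = |\Gq|$; alternatively this is immediate from Dickson's Hilbert series \eqref{eq:Dickson} and $H(\sym;t)=\prod_i(1-t)^{-1}$-style bookkeeping. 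To pin down each simple multiplicity, I would use that for the projective cover $P(\la)$ one has $\dim\mathrm{Hom}_{\Gq}(P(\la),\F\Gq) = \dim P(\la)$, and compare with $\dim\mathrm{Hom}_{\Gq}(P(\la),\sym_{\Gq})$: by part (1), $\mathrm{Hom}_{\Gq}(P(\la),\sym)\cong \sym^{\Gq}\otimes \mathrm{Hom}_{\Gq}(P(\la),\sym_{\Gq})$ as graded spaces, so $H_{\la}(\sym;t) = H(\sym^{\Gq};t)\cdot H_\la(\sym_{\Gq};t)$; evaluating the leading/overall behaviour and using that $\mathrm{Hom}_{\Gq}(P(\la),-)$ is exact gives $\dim\mathrm{Hom}_{\Gq}(P(\la),\sym_{\Gq}) = \dim P(\la) = \dim\mathrm{Hom}_{\Gq}(P(\la),\F\Gq)$, for every $\la\in X_r$. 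Since equality of all such Hom-dimensions forces equality of all composition multiplicities, part (2) follows.

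The main obstacle I anticipate is making the freeness of $\sym$ over $\sym^{\Gq}$ genuinely $\Gq$-equivariant, i.e.\ getting a \emph{$\Gq$-module} splitting $\sym\cong\sym^{\Gq}\otimes_{\F}\sym_{\Gq}$ rather than merely a graded vector-space splitting --- in modular characteristic one cannot just average. The resolution is to observe that for the \emph{composition-series} statements in the lemma a $\Gq$-module splitting is not actually required: $I^\bullet_+$ is a graded $\Gq$-submodule of $\sym$, so in each degree $d$ the short exact sequence $0\to (I^\bullet_+)_d\to \sym^d\to (\sym_{\Gq})^d\to 0$ already gives that the multiset of composition factors of $\sym^d$ is the union of those of $(I^\bullet_+)_d$ and $(\sym_{\Gq})^d$; iterating the decomposition $\sym = \bigoplus_{\text{monomials in }d_i} (\text{shifted copy of }\sym_{\Gq})$ coming from freeness --- which is an equality of graded $\Gq$-modules because each summand $d_1^{a_1}\cdots d_n^{a_n}\cdot\sym_{\Gq}$ is $\Gq$-stable once we fix any graded vector-space lift of $\sym_{\Gq}$ and note that multiplication by the $\Gq$-invariant $d_1^{a_1}\cdots d_n^{a_n}$ is a $\Gq$-map --- yields exactly $\mathrm{ch}(\sym)=\mathrm{ch}(\sym^{\Gq})\,\mathrm{ch}(\sym_{\Gq})$ at the level of composition factors. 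Everything else is Hilbert-series bookkeeping, for which Dickson's formula \eqref{eq:Dickson} is the essential input.
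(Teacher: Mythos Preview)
The paper does not actually prove this lemma; it simply quotes it from Mitchell \cite{Mi} and uses it as a black box. So there is no ``paper's own proof'' to compare against, and your proposal must be judged on its own merits.

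For part~(1) your strategy is the right one, but the resolution of the obstacle you correctly flagged does not work as written. You assert that ``each summand $d_1^{a_1}\cdots d_n^{a_n}\cdot\sym_{\Gq}$ is $\Gq$-stable once we fix any graded vector-space lift of $\sym_{\Gq}$,'' because multiplication by an invariant is a $\Gq$-map. But if the lift $M\subset\sym$ is \emph{not} $\Gq$-stable, then neither is $d^{\alpha}M$: multiplication by $d^{\alpha}$ is a $\Gq$-equivariant injection $\sym\to\sym$, so it carries $\Gq$-stable subspaces to $\Gq$-stable subspaces and non-stable ones to non-stable ones. What does work is to use the $\Gq$-stable filtration by powers of $I_+^{\bullet}$ (or, equivalently, the $\Gq$-equivariant Koszul complex on the regular sequence $d_1,\dots,d_n$): exactness of the Koszul complex gives the identity
\[
[\sym_{\Gq}] \;=\; [\sym]\cdot\prod_{i=0}^{n-1}\bigl(1-t^{\,q^n-q^i}\bigr)
\]
in the Grothendieck group of graded $\Gq$-modules, which is exactly the statement $[\sym]=[\sym^{\Gq}]\cdot[\sym_{\Gq}]$.

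For part~(2) there is a genuine gap. From part~(1) you correctly obtain $H_{\la}(\sym;t)=H(\sym^{\Gq};t)\cdot H_{\la}(\sym_{\Gq};t)$, hence $[\sym_{\Gq}:L(\la)]=\lim_{t\to 1}(1-t)^n\,H_{\la}(\sym;t)\cdot|\Gq|$. But to conclude this limit equals $\dim P(\la)$ you need an \emph{independent} computation of the leading coefficient of $H_{\la}(\sym;t)$ at $t=1$, and ``evaluating the leading/overall behaviour'' does not supply one. The summed identity $\sum_{\la}\dim L(\la)\,[\sym_{\Gq}:L(\la)]=|\Gq|=\sum_{\la}\dim L(\la)\dim P(\la)$ is only a single linear relation. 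The standard route (and essentially Mitchell's) is a Brauer-character Molien computation: for $p$-regular $g$ one has $\sum_{d}t^{d}\,\mathrm{br}(S^{d}(V))(g)=\det(1-tg)^{-1}$, so by part~(1)
\[
\sum_{d}t^{d}\,\mathrm{br}\bigl((\sym_{\Gq})^{d}\bigr)(g)
\;=\;\frac{\prod_{i=0}^{n-1}(1-t^{\,q^n-q^i})}{\det(1-tg)}.
\]
At $t=1$ the numerator vanishes to order $n$ while the denominator vanishes to order $<n$ whenever $g\neq 1$ (a $p$-regular $g\neq 1$ has some eigenvalue $\neq 1$); for $g=1$ the limit is $\prod_{i}(q^n-q^i)=|\Gq|$. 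Thus the Brauer character of $\sym_{\Gq}$ agrees with that of $\F\Gq$, which is what determines composition multiplicities. Your plan should be amended to include this step.
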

Mitchell further pointed out that $\sym_{\Gq}$ is not isomorphic to
$\F\Gq$ since $\sym_{\Gq}$  has the trivial module (in degree zero)
as a direct summand.

Thanks to Lemma~\ref{lem:Mitchell}, Theorem~\ref{thm:grcompmult2}
admits the following reformulation in terms of the coinvariant algebra.

\begin{thm}  \label{th:coinv}
Suppose $p>n$ and $0\leq k\leq q-2$.
Let $\mu$ be a partition with $\ell(\mu)\leq n$ and $\mu_1-\mu_n\leq p-1$.

(1) If $\mu_1\leq k$,
then the graded composition multiplicity of
$L((q-1)\rho-\mu+k\om_n)$ in the coinvariant algebra $S^\bullet(V)_{GL_n(q)}$ is
    \begin{align*}
    \ds\frac{t^{-n+(k+1)\frac{q^n-1}{q-1}}\prod^{n-1}_{i=0}(1-t^{q^n-q^i})}
    {\prod^n_{i=1}(1-t^{q^i-1})}~m_{\mu}(t^{-1},t^{-q},\ldots,t^{-q^{n-1}}).
    \end{align*}

(2) If $\mu_1+k<q-1$, then the graded composition multiplicity of
$L((q-1)\rho+w_0\mu+k\om_n)$ in the coinvariant algebra $S^\bullet(V)_{GL_n(q)}$ is
\begin{align*}
\ds\frac{t^{-n+(k+1)\frac{q^n-1}{q-1}}\prod^{n-1}_{i=0}(1-t^{q^n-q^i})}
{\prod^n_{i=1}(1-t^{q^i-1})}~m_{\mu}(t,t^{q},\ldots,t^{q^{n-1}}).
\end{align*}
\end{thm}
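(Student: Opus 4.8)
The plan is to deduce Theorem~\ref{th:coinv} directly from Theorem~\ref{thm:grcompmult2} together with Mitchell's structural result Lemma~\ref{lem:Mitchell} and Dickson's Hilbert series computation \eqref{eq:Dickson}. The key observation is that composition multiplicities are additive in short exact sequences, so for any simple $\Gq$-module $L$ the graded composition multiplicity of $L$ in a graded module depends only on the composition series of that module in each degree. By Lemma~\ref{lem:Mitchell}(1), $\sym$ has the same composition series as $\sym^{\Gq}\otimes \sym_{\Gq}$ as graded $\Gq$-modules, where $\sym^{\Gq}$ sits in degree-by-degree as a trivial module of graded dimension given by \eqref{eq:Dickson}. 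Since tensoring by the (graded) trivial module $\sym^{\Gq}$ just multiplies the graded multiplicity of any simple module by the Hilbert series $H(\sym^{\Gq};t) = \prod_{i=0}^{n-1}(1-t^{q^n-q^i})^{-1}$, we get for every simple $L$ the identity of formal power series
\begin{align*}
H\big({\rm Hom}_{\Gq}(P(\la),\sym);t\big)
= \frac{1}{\prod_{i=0}^{n-1}(1-t^{q^n-q^i})}\,
H\big({\rm Hom}_{\Gq}(P(\la),\sym_{\Gq});t\big),
\end{align*}
where $L = L(\la)$ and we have used that the graded composition multiplicity of $L(\la)$ equals the Hilbert series of ${\rm Hom}_{\Gq}(P(\la),-)$ applied to the module (this last equality holds because $P(\la)$ is projective, so ${\rm Hom}_{\Gq}(P(\la),-)$ is exact and picks out the multiplicity of $L(\la)$).

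First I would make precise the claim that tensoring by $\sym^{\Gq}$ scales graded composition multiplicities by the Hilbert series: in degree $j$, the degree-$j$ part of $\sym^{\Gq}\otimes\sym_{\Gq}$ is $\bigoplus_{a+b=j}(\sym^{\Gq})^a\otimes(\sym_{\Gq})^b$, and since $(\sym^{\Gq})^a$ is a trivial module of dimension $d_a := [t^a]\,H(\sym^{\Gq};t)$, its contribution to the composition multiplicity of $L(\la)$ in degree $j$ is $\sum_{a+b=j} d_a\cdot[(\sym_{\Gq})^b : L(\la)]$, which is exactly the degree-$j$ coefficient of $H(\sym^{\Gq};t)\cdot H({\rm Hom}_{\Gq}(P(\la),\sym_{\Gq});t)$. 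Combined with Lemma~\ref{lem:Mitchell}(1) this gives the displayed identity. Then I would simply invert it: the graded composition multiplicity of $L(\la)$ in $\sym_{\Gq}$ equals $\prod_{i=0}^{n-1}(1-t^{q^n-q^i})$ times its graded composition multiplicity in $\sym$.

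Finally I would substitute the two formulas from Theorem~\ref{thm:grcompmult2}. In case (1), with $\la = (q-1)\rho-\mu+k\om_n$, the graded multiplicity in $\sym$ is $\frac{t^{-n+(k+1)\frac{q^n-1}{q-1}}}{\prod_{i=1}^n(1-t^{q^i-1})}\,m_\mu(t^{-1},t^{-q},\ldots,t^{-q^{n-1}})$, so multiplying by $\prod_{i=0}^{n-1}(1-t^{q^n-q^i})$ yields precisely the asserted formula, and case (2) is identical with $m_\mu(t,t^q,\ldots,t^{q^{n-1}})$ in place of $m_\mu(t^{-1},\ldots,t^{-q^{n-1}})$ and $\la=(q-1)\rho+w_0\mu+k\om_n$. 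There is essentially no obstacle here; the only point requiring a moment's care is the bookkeeping that $\sym^{\Gq}$ enters as a \emph{trivial} graded module (so that tensoring by it is a clean scaling on the level of composition factors, not merely on dimensions), which is exactly what Lemma~\ref{lem:Mitchell}(1) and Dickson's theorem provide. One should also note in passing that the resulting expression is genuinely a power series in $t$ (the negative powers of $t$ inside the $m_\mu$ factor are cancelled by the high-degree vanishing built into the numerator and by $\prod_{i=0}^{n-1}(1-t^{q^n-q^i})$ having constant term $1$), so the "graded composition multiplicity" is well defined, matching the situation already present in Theorem~\ref{thm:grcompmult2}.
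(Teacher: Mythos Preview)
Your proposal is correct and follows exactly the approach the paper intends: the paper's ``proof'' of Theorem~\ref{th:coinv} is the one-line remark that, thanks to Lemma~\ref{lem:Mitchell}, Theorem~\ref{thm:grcompmult2} admits this reformulation, and you have simply spelled out the bookkeeping behind that remark (that $\sym^{\Gq}$ is a graded trivial module whose Hilbert series \eqref{eq:Dickson} factors out of the graded composition multiplicity, so one multiplies the formulas of Theorem~\ref{thm:grcompmult2} by $\prod_{i=0}^{n-1}(1-t^{q^n-q^i})$).
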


Observe that the limit as $t \mapsto 1$ of either formula in the above theorem
is equal to
\begin{align*}
&=\ds
\frac{\prod^{n-1}_{i=0} (q^n-q^i)}{\prod^n_{i=1}(q^i-1)}  m_{\mu} (1,1,\ldots,1) \\
&={\rm dim~St}\cdot m_{\mu} (1,1,\ldots,1)   \\
&={\rm dim}~\St\cdot|W^{\mu}|,
\end{align*}
which is the dimension of the corresponding projective cover by
Lemma~\ref{lem:Ballard}. This is consistent with
Lemma~\ref{lem:Mitchell}, since the composition multiplicity of a
simple module in the regular representation of a finite group is always equal
to the dimension of its projective cover.

\subsection{Some open problems}

Theorem~\ref{thm:grcompmult1} and Theorem~\ref{thm:grcompmult2} have
provided partial answers to the problem of finding the graded
composition multiplicity of an irreducible $\Gq$-module in the
symmetric algebra $\sym$. They are obtained by converting the
computations of the Steinberg module multiplicity in $\sym \otimes
\wedge^\nu(V)$ in Section~\ref{sec:Stmult} and the results of Ballard
and Tsushima.

\begin{question}
How to decompose $\St \otimes N$ into a direct sum of PIMs for a
reasonable $\Gq$-module $N$? Can we relax the restriction on $p$?
\end{question}
Suitable generalizations of results  of Ballard and Tsushima in
answer to the above question would allow one to expand the range of
applicability of the approach developed in this paper. The methods
developed in this paper seem likely to apply to the following.

\begin{question}
Find the composition multiplicity of the Steinberg module (or the
simple modules around it) in the symmetric algebra of the natural
module for other classical finite groups of Lie type.
\end{question}

The $\Gq$-module $\sym$ is not semisimple, and it makes sense to ask
the following.
\begin{question}
What is the graded multiplicity of a simple $\Gq$-module $L(\mu)$ in
the socle of $\sym$?
\end{question}

Dickson's classical theorem \cite{Di} (see \eqref{eq:Dickson})
provides a first beautiful answer in case when $L(\mu)$ is the
trivial module. Several generalizations have been obtained in
\cite{Mui, Mi, MP, KM}, culminating in our previous work \cite{WW}
which settled this socle mutiplicity question for the simple modules
of the form $\wedge^m(V) \otimes \Det^k$ for arbitrary prime powers
$q=p^r$, $1\le m \le n$ and $0 \le k \le q-2$. The answer in {\em
loc. cit.} fit into the following form, which we ask if it holds for
a wider class of $L(\mu)$:

Let $\mu=(\mu_1,\ldots,\mu_n)$ be a partition of $d$ with $1\leq
d\leq p-1$ and $0\leq k\leq q-2-\mu_1$. Is the multiplicity of
the simple module $L(\mu)\otimes \Det^k$ in the socle of $\sym$
given by
\begin{align*}
\frac{t^{k\cdot \frac{q^n-1}{q-1}}}
{\prod^{n-1}_{i=0}(1-t^{q^n-q^i})} \cdot
s_{\mu}(t,t^q,\ldots,t^{q^{n-1}})?
\end{align*}

%


\begin{thebibliography}{ABC}

\bibitem[Ba]{Ba} J. Ballard,
{\em Projective modules for finite Chevalley groups}, Trans. Amer.
Math. Soc. {\bf 245} (1978), 221--249.

\bibitem[BNP]{BNP} C. Bendel, D. Nakano and C. Pillen,
{\em On the vanishing ranges for the cohomology of finite groups of
Lie type},
 arXiv:0906.0026, 2009.

\bibitem[Ch]{Ch} L. Chastkofsky,
{\em Projective characters for finite Chevalley groups}, J. Algebra
{\bf 69} (1981), 347--357.

\bibitem[CW]{CW} D. Carlisle and G. Walker,
{\em Poincar\'{e} series for the occurrence of certain modular
representations of ${\rm GL}(n,p)$ in the symmetric algebra}, Proc.
Roy. Soc. Edinburgh Sect. A {\bf 113} (1989), 27--41.

\bibitem[Di]{Di} L. Dickson,
{\em A fundamental system of invariants of the general modular
linear group with a solution of the form problem}, Trans. Amer.
Math. Soc. {\bf 12} (1911), 75--98.

\bibitem[FH]{FH} W. Fulton and J. Harris,
{\em Representation Theory. A First Course},
Grad. Texts in Math. {\bf 129}, Springer, 1991.

\bibitem[Hu]{Hu} J. Humphreys,
{\em Modular representations of finite groups of Lie type}, London
Mathematical Society Lecture Note Series {\bf 326}, Cambridge
University Press, Cambridge, 2006.

\bibitem[HV]{HV} J. Humphreys and  D. Verma,
{\em Projective modules for finite Chevalley groups},
Bull. Amer. Math. Soc. {\bf 79} (1973), 467--468.

\bibitem[J1]{J1} J. Jantzen,
{\em Zur reduktion modulo $p$ der charaktere von Deligne and
Lusztig}, J. Algebra {\bf 70} (1981), 452--474.

\bibitem[J2]{J} J. Jantzen,
{\em Representations of Algebraic Groups},
Second edition, Mathematical Surveys and Monographs {\bf 107}, AMS, 2003.

\bibitem[KM]{KM} N. Kuhn and S. Mitchell,
{\em The multiplicity of the Steinberg representation of
$GL_n(\F_q)$ in the symmetric algebra}, Proc. Amer. Math. Soc. {\bf
96} (1986), 1--6.

\bibitem[Lu]{Lu} G. Lusztig,
{\em The discrete series of $GL_n$ over a finite field}, Ann. of Math.
Studies {\bf 81}, Princeton Univ. Press, 1974.

\bibitem[MT]{MT} P. Minh and V. T\`ung,
{\em Modular invariants of parabolic subgroups of general linear
groups},
 J. Algebra {\bf 232} (2000),  197--208.

\bibitem[Mi]{Mi} S. Mitchell, {\em Finite complexes with $A(n)$-free cohomolgy},
Topology {\bf 24} (1985),  227--248.

\bibitem[MP]{MP}
S. Mitchell and S. Priddy, {\em Stable splittings derived from the
Steinberg module}, Topology {\bf 22} (1983), 285--298.

\bibitem[Mui]{Mui} H. Mui,
{\em Modular invariant theory and chomomogy algebras of symmetric
groups}, J. Fac. Sci. Univ. Tokyo Sect. {\bf 1A} Math. {\bf 22} (1975),
319--369.

\bibitem[Ts]{Ts} Y. Tsushima,
{\em On certain projective modules for finite groups of Lie type},
Osaka J. Math. {\bf 27} (1990), 947--962.

\bibitem[WW]{WW} J. Wan and W. Wang,
{\em Twisted Dickson-Mui invariants and the Steinberg module multiplicity},
 Math. Proc. Camb. Phil. Soc. (to appear), arXiv:1009.0414.

\bibitem[Wa]{Wa}J.-P. Wang,
{\em Sheaf cohomology on $G/B$ and tensor products of Weyl modules},
J. Algebra {\bf 77} (1982), 162--185.


\end{thebibliography}
\end{document}